\numberwithin{equation}{section}
\theoremstyle{plain}
\newtheorem{Th}{Theorem}[section]
\newtheorem{Lemma}[Th]{Lemma}
 \theoremstyle{definition}
\newtheorem{Def}[Th]{Definition}
\newtheorem{Rem}[Th]{Remark}
\newtheorem{?}[Th]{Problem}
\newcommand*\R{\mathbb{R}}
\newcommand*\Om{\Omega}
\newcommand{\Div}{\text{div}_x}
\newcommand{\vectoru}{\mathbf{u}}
\newcommand{\vectorU}{\mathbf{U}}
\newcommand{\vectorv}{\mathbf{v}}
\newcommand{\Epsilon}{\mathcal{E}}
\newcommand{\dx}{\text{ d}x}
\newcommand{\dt}{\text{ d}t\;}
\newcommand{\invdiv}{\nabla_x \Delta^{-1}_{x}}
\newcommand{\vectorphi}{\pmb{\varphi}}
\begin{document}

\title[On weak-strong uniqueness]{On weak-strong uniqueness for compressible Navier-Stokes system with general pressure laws}

\author[N. Chaudhuri]{Nilasis Chaudhuri}

\address{Institute f{\"u}r Mathematik, Technische Universit{\"a}t Berlin, Straße des 17. Juni 136, D -- 10623 Berlin, Germany.} 

\email{chaudhuri@math.tu-berlin.de}

 \subjclass[2010]{Primary: {35 Q 30} . Secondary: {35 B 30}}

 \keywords{Compressible Navier stokes equation, Weak--Strong uniqueness, Non-monotone pressure}

\begin{abstract} The goal of the present paper is to study the weak--strong uniqueness problem for the compressible Navier--Stokes system with a general barotropic pressure law. Our results include the case of a hard sphere pressure 
of Van der Waals type with a non--monotone perturbation and a Lipschitz perturbation of a monotone pressure. Although the main tool is the relative energy inequality, the results are conditioned by the presence of viscosity and do not seem extendable to the Euler system.

\end{abstract}

\maketitle

\section{Introduction} 
Let $T>0$ and $\Omega \subset \mathbb{R}^d,\; d\in \{2,3\}$ be a bounded domain. We consider the compressible Navier-Stokes equation in time-space cylinder $(0,T)\times \Omega$:
\begin{align}
\partial_t \varrho + \text{div}_x (\varrho \mathbf{u})&=0,\label{eq:cont}\\
\partial_t(\varrho \mathbf{u}) + \Div (\varrho \mathbf{u} \otimes \mathbf{u})+\nabla_x p(\varrho)&=\Div \mathbb{S}(\nabla_x \mathbf{u})\label{eq:momentum}. 
\end{align}
Here $\mathbb{S}(\nabla_x \mathbf{u})$ is \textit{Newtonian stress tensor} defined by
\begin{align}\label{cauchy_str}
\mathbb{S}(\nabla_x \mathbf{u})=\mu \bigg(\frac{\nabla_x \vectoru + \nabla_x^{T} \vectoru}{2}-\frac{1}{d} (\Div\vectoru)\mathbb{I} \bigg) + \lambda (\Div \vectoru) \mathbb{I},
\end{align}
where $\mu >0$ and $\lambda >0$ are the \textit{shear} and \textit{bulk} viscosity coefficients, respectively. An external force $\mathbf{f}$ can be included in the momentum equation \eqref{eq:momentum}. 

We focus on two basic types of boundary conditions:(i) the no slip boundary condition:
\begin{equation} \label{noslip}
\vectoru|_{\{\partial\Om \times (0,T)\}=0},
\end{equation}
or (ii) the periodic boundary conditions, where the domain $\Omega$ is identified with the flat torus,
\begin{equation} \label{period}
\Omega = \left( [-1,1]|_{\{-1, 1 \}} \right)^d.
\end{equation}
 
We denote $ C^{0,1}[0,\infty)$ the space of globally Lipschitz functions.
Here we consider the following pressure laws.

\begin{itemize}
	\item\textbf{[Barotropic Law]} In a perturbation of the isentropic setting, the pressure $p$ and the density $\varrho$ of the fluid are interrelated by :
	\begin{align}\label{p-condition}
	p(\varrho)=a\varrho^{\gamma} + q(\varrho),\; \text{with } \gamma\geq  1,\;  a>0\text{ and }  
	q\in C^{0,1}[0,\infty) \ \mbox{globally Lipschitz.}
	\end{align}
	As a matter of fact, the hypothesis on $\gamma$ will reflect the growth of $q$ as $\varrho \to \infty$.
	Note that our goal is not to show \emph{existence} of solutions but stability  of strong solutions in a larger class 
	of weak/measure valued solutions.
	
	\item \text{[General pressure]} Instead of considering $a\varrho^\gamma$ we can take a more 
	general {barotropic equation of state},
	\begin{align}\label{p- condition2}
	\begin{split}
	&p(\varrho)=h(\varrho)+q(\varrho), \text{ with } q\in C^{0,1}[0,\infty) \ \mbox{globally Lipschitz},\\
	&h\in C^1[0,\infty),\;  h(0)=0,\;h'>0,\;\text{in } (0,\infty),\; \text{and } 
	\liminf_{\varrho \rightarrow \infty} \frac{h(\varrho)}{\varrho^{\gamma}} >0  \  \text{with } \ 
	\gamma \geq 1.
	\end{split}
	\end{align}

	The above hypotheses on the equation of state are motivated by the recent work of Bresch and Jabin \cite{bresch-jabin} .
	
	\item \textbf{[Hard-Sphere Law]} {Finally, we consider a singular pressure law, where the pressure $p$ and the density $\varrho$ of the fluid are interrelated} by a non-monotone hard-sphere equation of state in the interval $[0,\bar{\varrho})$:
	\begin{align}\label{pr-defn}
	\begin{split}
	&p\in C^1[0,\bar{\varrho}),\;  p(\varrho)=h(\varrho)+ q(\varrho),h(0)=0,\\ & h^{'} >0 \; \text{on }(0,\bar{\varrho}),\;\lim_{\varrho\rightarrow \bar{\varrho}} h(\varrho)=+\infty,\; q\in C^1_{c}(0,\bar{\varrho}).
	\end{split}
	\end{align} 
\end{itemize}

Compressible Navier--Stokes system has been widely studied by many people in the last few decades. 

\begin{itemize}
	\item If $q\equiv 0$, the relation \eqref{p-condition} reduces to standard \textit{isentropic} equation of state, for which the problem \eqref{eq:cont}-\eqref{cauchy_str}  admits global in time weak solutions for any finite energy initial data, see Antontsev et al.\cite{Ant-kazhi-monakov} for $d=1$, Lions\cite{plionsbook} for $d=2$, $\gamma\geq \frac{3}{2}$, $d=3$, $\gamma \geq \frac{9}{5}$, and 
	\cite{fei-DVCF} for $d=2$, $\gamma>1$, $d=3$, $\gamma>\frac{3}{2}$. 
{One can go further and introduce the more general class of measure--valued solutions in the spirit 
of the pioneering work of	 DiPerna \cite{diperna-mv}}. Regarding compressible Navier-Stokes for $\gamma \geq 1$, Feireisl et al.\cite{Fei-piotr-agnies-weid-2016} proved the existence of dissipative measure valued solution.

	\item If $q\neq 0$, the pressure need not be a monotone function of density. A weak solution however still exists for $\gamma> \frac{3}{2}$ and $N=3$, see \cite{Fei-2002-exis-jde}. Recently instead of compactly supported $q$, Bresch-Jabin \cite{bresch-jabin} proved existence for more general pressure. 
	
	\item The pressure law \eqref{pr-defn} is motivated by two famous models for viscous fluids, namely Van Der Waal's equation of state and Hard sphere law modeled by Carnahan-Sterling. Now for some constant temperature Van Der Waal's equation of state gives,
	\begin{align*}
	p(\varrho)= C \frac{\bar{p}(\varrho)}{\bar{\varrho}-\varrho},
	\end{align*} 
	where $\bar{p}$ is some polynomial and $\bar{\varrho}$ is a positive constant. {The Carnahan-Sterling model reflects the the hard sphere model and is given by,
		\begin{align*}
		p(\varrho)= C \frac{\tilde{p}(\varrho)}{(\bar{\varrho}-\varrho)^3},
		\end{align*}
		with $\tilde{p}$ polynomial and $\bar{\varrho}$ positive constant.} The main difference for these models from isentropic setting is here $p(\varrho)\rightarrow +\infty$ when $\varrho \rightarrow \bar{\varrho}$. In \cite{fei-lu-malek2016} Feireisl et. al. and \cite{Fei-Zhang} Feireisl and Zhang, {the existence of global weak solution for similar models 
		was shown.}
\end{itemize}

The weak--strong uniqueness principle asserts that a weak and the strong solution emanating for the same initial data coincide as long as the strong solution exists. {The leading idea is based on the concept of relative entropy that goes 
back to the pioneering paper by Dafermos \cite{Daf4} , and that was later exploited in different context by 
Berthelin and Vasseur \cite{BeVa1}, Mellet and Vasseur \cite{MeVa1}, or Saint--Raymond \cite{SaiRay} to name a few examples} 

The available results for the compressible Navier--Stokes system are as follows.

\begin{itemize}

\item 
Germain \cite{Germain} showed the weak--strong uniqueness in a class of weak solutions enjoying extra regularity properties. Unfortunately, 
the existence of weak solutions in his class is still an open problem.

\item
Feireisl, Novotn\' y and Sun \cite{fei-nov-sun-2011} and Feireisl, Jin and Novotn\' y \cite{fei-jin-nov} showed the weak--strong uniqueness result 
in the existence class for a isentropic (barotropic) pressure equation of state with strictly increasing pressure. These results were extended to 
the class of the so--called dissipative masure--valued solutions by Feireisl et al. \cite{Fei-piotr-agnies-weid-2016}.

\item 
Feireisl, Lu and Novotn\' y \cite{Fei-yong-nov-2018}extended the weak--strong uniqueness principle to the hard--sphere pressure type equation of state, still with 
strictly monotone pressure--density relation.

\item 
{Recently, Feireisl \cite{fei-2018-nm} proved weak--strong uniqueness in the class of weak solutions, with a non--monotone 
compactly supported perturbation of the isentropic equation of state.}

\end{itemize}

Our goal in this paper is to extend the weak--strong uniqueness principle in two directions:

\begin{itemize}
	
	\item[1.] {To extend the results of \cite{fei-2018-nm} to a more general class of non--monotone Lipschitz perturbations.}
    \item[2.] {To consider non--monotone compact perturbations of the hard sphere model in the context of weak solutions.}

\end{itemize}

The plan for this paper is as follows:
\begin{itemize}
	\item In the first part we will discuss about weak--strong uniqueness where pressure is given by \eqref{p-condition} and \eqref{p- condition2}.
	\item For the later part of the paper we discuss weak strong uniqueness when pressure is given by \eqref{pr-defn}.
\end{itemize}

\part{Pressure following equation of state \eqref{p-condition} and \eqref{p- condition2}}

In this part, we focus on the problem with the no-slip boundary conditions \eqref{noslip}.

\section{Dissipative Weak Solution, Main Result}
Before going to our formal discussion, define \textit{pressure potential } as :\\

\begin{itemize}
	\item For $h(\varrho)=a \varrho^\gamma$, 
	\begin{align} \label{P-defn2}
	P(\varrho)= H(\varrho)+Q(\varrho), \text{ where }H(\varrho)=\frac{a}{\gamma-1} \varrho^\gamma \text{ and }Q(\varrho)=\varrho \int_{1}^{\varrho} \frac{q(z)}{z^2} \; \text{d}z .
	\end{align}
	\item When $p$ is given by the more general formula \eqref{p- condition2},
	\begin{align}\label{P-defn}
	\begin{split}
	&P(\varrho)= H(\varrho)+Q(\varrho) \text{ where}\\
	&H(\varrho)=\varrho \int_{1}^{\varrho} \frac{h(z)}{z^2} \; \text{d}z,\; Q(\varrho)=\varrho \int_{1}^{\varrho} \frac{q(z)}{z^2} \; \text{d}z.
	\end{split}
	\end{align}
\end{itemize}
As a trivial consequence of the above we obtain,
\begin{align}
	\begin{split}
		&\varrho H^{'}(\varrho) -H(\varrho)= h(\varrho) \text{ and } \varrho H^{''} (\varrho)=h'(\varrho) \text{ for } \varrho>0,\\
		&\varrho Q^{'}(\varrho) -Q(\varrho)= q(\varrho) \text{ and } \varrho Q^{''} (\varrho)=q'(\varrho) \text{ for } \varrho>0.
	\end{split}
\end{align}
We impose some hypothesis on the initial data as,
\begin{align}\label{id1}
\begin{split}
&\varrho(0,\cdot) =\varrho_{0} (\cdot),\; \varrho_{0}\geq 0\text{ a.e. in }\Om \text{ and } \varrho_{0}\in L^{\gamma}(\Om),\\
	&  \; \int_{\Omega}\bigg( \frac{\vert (\varrho \vectoru)_0 \vert^2 }{\varrho_0} +H(\varrho_0) \bigg) < \infty.
\end{split}
\end{align}
The definition of \emph{Dissipative Weak Solution} is as follows:
\begin{Def}\label{defw}
	We say that $[\varrho, \vectoru]$ is a dissipative weak solution in $(0,T)\times \Omega$ to the system of equations \eqref{eq:cont}-\eqref{cauchy_str}, with the no-slip condition \eqref{noslip}, supplemented with initial data \eqref{id1} and pressure follows the law \eqref{P-defn} and \eqref{P-defn2} if:
	
	\begin{itemize}
		\item \textbf{Regularity Class:} $\varrho \in C_{w}([0,T];L^{\gamma}(\Om))$, 
		$ p(\varrho) \in L^{1}((0,T)\times \Omega)$,\\ $\vectoru \in L^2(0,T;W^{1,2}_{0} (\Omega; \mathbb{R}^d))$, 
		$\varrho \vectoru \in C_{w}([0,T];L^{\frac{2\gamma}{\gamma + 1}}(\Om;\mathbb{R}^d))$, \\$ \varrho \vert u \vert^2 \in L^{\infty}(0,T;L^{1}(\Om))$.
		\item \textbf{Renormalized equation of Continuity:} 
		For any $\tau \in (0,T) $ and any $\varphi \in C_{c}^{1}([0,T]\times \bar{\Om})$ {it holds} 
		\begin{align}\label{continuity_eqn_weak}
		\begin{split}
		&\big[ \int_{ \Om} (\varrho+ b(\varrho)) \varphi \dx\big]_{t=0}^{t=\tau} \\
		&= 
		\int_0^{\tau} \int_{\Om} [ (\varrho+b(\varrho)) \partial_t \varphi + (\varrho+ b(\varrho)) \vectoru \cdot \nabla_x \varphi+ (b(\varrho)-\varrho b'(\varrho)\Div \vectoru \varphi] \dx \dt ,
		\end{split}
		\end{align} 
		where, $b\in C^1[0,\infty),\; \exists r_b>0 \text{ such that }b'(x)=0,\; \forall x>r_b$.
		\item \textbf{Momentum equation:}For any $\tau\in (0,T)$ and any $\pmb{\varphi} \in C^{1}_c([0,T]\times \Om;\mathbb{R}^d)$, it holds
		\begin{align}\label{momentum_eqn_weak}
		\begin{split}
		&\bigg[\int_{\Om} \varrho \vectoru(\tau,\cdot)\cdot \vectorphi(\tau,\cdot) \dx\bigg]_{t=0}^{t=\tau} \\
		&=\int_0^{\tau}\int_{\Om} [\varrho \vectoru\cdot \partial_{t} \vectorphi + (\varrho \vectoru \otimes \vectoru : \nabla_x \vectorphi + p(\varrho) \Div \vectorphi -\mathbb{S}(\nabla_x \vectoru):\nabla_x \vectorphi] \dx \dt ,\\		
		\end{split}
		\end{align}
		\item \textbf{Energy inequality:}For a.e. $\tau \in (0,T)$, the energy inequality holds:
		\begin{align}\label{eng_ineq_weak}
		\begin{split}
		\bigg[\int_{\Om} &\bigg( \frac{1}{2} \varrho \vert \vectoru \vert^2 +P(\varrho)\bigg)(t,\cdot) \dx \bigg]_{t=0}^{t=\tau}+ \int_{0}^{\tau} \int_{\Om} \mathbb{S}(\nabla_x \vectoru) :\nabla_x \vectoru\dx \dt \leq 0.\\
		\end{split}
		\end{align}
	\end{itemize}
\end{Def}

\subsection{Discussion of Definition:}
 Now from \eqref{P-defn} we have $H(\varrho) \approx \varrho^\gamma $, 
{and $Q(\varrho) \approx \varrho \log (\varrho)$ for all $\varrho$ large enough. In particular, there is a constant  
	$c > 0$ such that}
\[
P(\varrho) \geq \frac{1}{2} H(\varrho) - c \ \mbox{for all}\ \varrho \geq 0 
\]
With help of a limiting procedure in \eqref{continuity_eqn_weak}  we have,
\begin{equation} \label{pom}
\big[ \int_{\Om} b(\varrho)(t,\cdot) \dx \big]_{t=0}^{t=\tau}=-\int_{0}^{\tau} \int_{ \Om} (\varrho b'(\varrho)-b(\varrho))\Div\vectoru \dx \dt.
\end{equation}

The class of functions $b$ in \eqref{pom} can be extended to those for which
both $b'(\varrho) \varrho$ and $b(\varrho)$ belong to $L^2(0,\infty)$. {As our goal is to apply 
\eqref{pom} to the globally Lipschitz perturbation $q$, we have to assume $\gamma \geq 2$ in the pressure law. Note that similar 
hypothesis is also used by Bresch and Jabin \cite{bresch-jabin}. Accordingly, we have}  
\begin{align}
\bigg[\int_{ \Om}  Q(\varrho)  \dx \bigg]_{t=0}^{t=\tau} = - \int_{0}^{\tau} \int_{ \Om} q(\varrho)  \Div \vectoru \dx \dt
\end{align}
{as long as $q$ is a globally Lipschitz function and $\gamma \geq 2$. Consequently}
\begin{align}\label{modified_eng_ineq}
\begin{split}
\bigg[\int_{\Om} &\bigg( \frac{1}{2} \varrho \vert \vectoru \vert^2 +H(\varrho)\bigg)\dx \bigg]_{t=0}^{t=\tau}+ \int_{0}^{\tau} \mathbb{S}(\nabla_x \vectoru) :\nabla_x \vectoru\dx \dt \leq \int_{0}^{\tau} \int_{ \Om} q(\varrho)  \Div \vectoru \dx \dt.\\
\end{split}
\end{align}
\subsection{Main Result}
Our goal is to show the following result.
\begin{Th}\label{theorem1}
	Let $\Om \subset \R^d$, $d=1,2,3$, be a bounded Lipschitz domain. Let the pressure be given by \eqref{p-condition} or \eqref{p- condition2}, with $\gamma \geq 2$. Suppose that $[\varrho,\vectoru]$ is a dissipative weak solution and $[r,\vectorU]$ a classical solution of the problem \eqref{eq:cont}-\eqref{cauchy_str} with no slip boundary condition 
	{\eqref{noslip}} on the time interval $[0,T] $ such that,
	\[ \varrho(0,\cdot)=r(0,\cdot)>0,\; \varrho\vectoru(0,\cdot)=r(0,\cdot)\vectorU(0,\cdot). \]
	Then 
	\[ \varrho=r,\; \vectoru=\vectorU\text{ in }(0,T)\times \Om.\]
\end{Th}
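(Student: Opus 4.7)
The plan is to run the standard relative energy argument, with the genuinely new content concentrated in the treatment of the non--monotone Lipschitz perturbation $q$. I introduce the relative energy functional
\begin{align*}
\mathcal{E}(\varrho, \vectoru \,|\, r, \vectorU)(\tau) = \int_{\Omega} \left[ \tfrac{1}{2}\varrho \vert \vectoru - \vectorU \vert^2 + H(\varrho) - H'(r)(\varrho - r) - H(r) \right](\tau,\cdot) \dx,
\end{align*}
which is nonnegative by convexity of $H$ (since $h' > 0$). Starting from the modified energy inequality \eqref{modified_eng_ineq} for the weak solution, I would test the momentum equation \eqref{momentum_eqn_weak} with $\vectorU$, and the renormalized continuity equation \eqref{continuity_eqn_weak} first with $\tfrac{1}{2}\vert \vectorU \vert^2$ and then with $H'(r)$, then subtract the analogous pointwise identities satisfied by the classical solution $(r,\vectorU)$. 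Using $\vectorU \in W^{1,\infty}$ and $0 < r_* \le r \le r^*$ along the classical trajectory, the outcome is a relative energy inequality
\begin{align*}
\mathcal{E}(\tau) + \int_{0}^{\tau}\!\!\int_{\Omega} \bigl(\mathbb{S}(\nabla_x \vectoru) - \mathbb{S}(\nabla_x \vectorU)\bigr) : \nabla_x(\vectoru - \vectorU) \dx \dt \leq \int_{0}^{\tau} R(t) \dt,
\end{align*}
with $\mathcal{E}(0)=0$ by the matching initial data; Korn's inequality provides coercivity of the viscous term in the form $c\|\nabla_x(\vectoru - \vectorU)\|_{L^2}^2$.

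For the monotone $H$--part, the remainder $R$ is treated along the lines of Feireisl--Novotn\'y--Sun: the convective residuals reduce to $C\int_\Omega \varrho\vert\vectoru - \vectorU\vert^2 \le C\mathcal{E}$, while the $h$--pressure term $\int_\Omega [h(\varrho) - h'(r)(\varrho - r) - h(r)]\Div \vectorU \dx$ is dominated by $\|\nabla_x \vectorU\|_\infty \mathcal{E}$ via strong convexity of $H$ on the essential set $\{\varrho \sim r\}$ and the $\gamma$--growth $H(\varrho) \gtrsim \varrho^\gamma$ on the residual set.

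The main obstacle is handling the Lipschitz perturbation $q$, since here the pressure need not be monotone and $q$ is only $C^{0,1}$. After subtracting the strong--solution $q$--identities, its net contribution splits (modulo lower--order terms that factor through $\mathcal{E}$) into
\begin{align*}
\int_\Omega \bigl(q(\varrho) - q(r) - q'(r)(\varrho - r)\bigr) \Div \vectorU \dx + \int_\Omega \bigl(q(\varrho) - q(r)\bigr) \Div(\vectoru - \vectorU) \dx.
\end{align*}
For the first piece, Lipschitz continuity gives the pointwise bound $\vert q(\varrho) - q(r) - q'(r)(\varrho - r)\vert \leq 2L\vert \varrho - r\vert$, and the key pointwise inequality
\begin{align*}
\vert \varrho - r\vert^2 \leq C(r_*, r^*) \bigl[H(\varrho) - H'(r)(\varrho - r) - H(r)\bigr],
\end{align*}
which holds on the essential set by strong convexity of $H$ near $[r_*,r^*]$ and on the residual set thanks to the lower bound $H(\varrho) \gtrsim \varrho^\gamma$ with $\gamma \geq 2$, bounds it by $C\mathcal{E}$. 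For the cross--term, Young's inequality produces
\begin{align*}
\int_\Omega (q(\varrho) - q(r))\Div(\vectoru-\vectorU) \dx \leq \delta \|\nabla_x(\vectoru - \vectorU)\|_{L^2}^2 + C_\delta L^2 \|\varrho - r\|_{L^2}^2 \leq \delta \|\nabla_x(\vectoru - \vectorU)\|_{L^2}^2 + C_\delta \mathcal{E},
\end{align*}
and a small $\delta$ absorbs the gradient term into the viscous dissipation. The hypothesis $\gamma \geq 2$ is used precisely here, both to justify the renormalization leading from \eqref{continuity_eqn_weak} to \eqref{modified_eng_ineq} for the Lipschitz $q$, and to control $\|\varrho - r\|_{L^2}^2$ by $\mathcal{E}$ on the residual set.

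Collecting the estimates produces $\mathcal{E}(\tau) \leq C \int_0^\tau \mathcal{E}(t) \dt$, with constant depending on $\|r\|_{C^1}$, $\|\vectorU\|_{W^{1,\infty}}$, $r_*$, the viscosities, and the Lipschitz norm of $q$. Gronwall's lemma forces $\mathcal{E} \equiv 0$ on $[0,T]$, and since $r$ is bounded away from zero this yields $\varrho = r$ and $\vectoru = \vectorU$ almost everywhere on $(0,T) \times \Omega$, completing the proof.
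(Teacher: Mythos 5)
Your overall strategy (relative energy built from the convex part $H$ only, Gronwall, $\gamma\geq 2$ used both for the renormalization step giving \eqref{modified_eng_ineq} and for $\|\varrho-r\|_{L^2}^2\leq C\mathcal{E}$, and absorption of the $q$--cross term into the viscous dissipation via Young and Korn) is the paper's strategy, and your treatment of the term $\int_{\Om}(q(\varrho)-q(r))\Div(\vectoru-\vectorU)\dx$ coincides with the estimate of $\mathcal{L}_4$. However, there is a genuine gap in your handling of the other $q$--contribution. First, the decomposition you assert is not what the relative energy computation produces: carrying out the calculation (as in \eqref{re2}--\eqref{rel_ent_REM}), the strong-solution side of the perturbation enters as $\int_{\Om}(\tfrac{\varrho}{r}-1)(\vectorU-\vectoru)\cdot\nabla_x q(r)\dx$, i.e.\ a product of a \emph{density difference} with a \emph{velocity difference} and the bounded field $\nabla_x q(r)$ (here only $|\nabla_x q(r)|\leq L_q\|r\|_{C^1}$ is needed, never $q'(r)$, which for a merely Lipschitz $q$ need not exist on the range of $r$). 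Your term $\int_{\Om}\bigl(q(\varrho)-q(r)-q'(r)(\varrho-r)\bigr)\Div\vectorU\dx$ is a genuinely different quantity, and no argument is given that the difference "factors through $\mathcal{E}$".

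Second, even granting your decomposition, the stated bound of that piece by $C\mathcal{E}$ is false. Since $q$ is only Lipschitz, the residual satisfies merely $|q(\varrho)-q(r)-q'(r)(\varrho-r)|\leq 2L_q|\varrho-r|$, which is \emph{linear} in $\varrho-r$; the pointwise inequality $|\varrho-r|^2\leq C\bigl(H(\varrho)-H(r)-H'(r)(\varrho-r)\bigr)$ only yields $\int_{\Om}|\varrho-r|\dx\lesssim |\Om|^{1/2}\mathcal{E}^{1/2}$, not $C\mathcal{E}$ (for small $|\varrho-r|$ one cannot dominate $|\varrho-r|$ by $|\varrho-r|^2$). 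A Gronwall inequality of the form $\mathcal{E}(\tau)\leq C\int_0^\tau(\mathcal{E}+\mathcal{E}^{1/2})\dt$ with $\mathcal{E}(0)=0$ does not force $\mathcal{E}\equiv 0$, and the term carries no velocity-difference factor, so it cannot be absorbed by the dissipation either. This is precisely the obstruction that dictates the paper's bookkeeping: every $q$--difference must appear either multiplied by $\Div(\vectoru-\vectorU)$ (absorbed by the dissipation, using the no-slip condition) or as $(\tfrac{\varrho}{r}-1)(\vectorU-\vectoru)\cdot\nabla_x q(r)$, which is then estimated by splitting into the essential set and the two residual sets (the cut-off $\psi$ and the functions $w_1,w_2$ in \eqref{psi}--\eqref{w_2}), using Poincar\'e's inequality and a small $\delta$ of the dissipation on the vacuum set, and the bound $1+\varrho^\gamma\geq\max\{\varrho,\varrho^2\}$ on the high-density set. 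To repair your proof you should drop the Taylor expansion of $q$ around $r$ altogether and reproduce this structure.
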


\begin{Rem}
{Hypothesis $\gamma \geq 2$ is related to the growth of the perturbation $q$ when $\varrho \to \infty$. The result remains valid for any $\gamma \geq 1$ as soon as}
\[
q'(\varrho) \approx \varrho^\alpha \ \mbox{for}\ \varrho \to \infty,\ 
\mbox{where}\ \alpha + 1 \leq \frac{\gamma}{2}.
\]

\end{Rem}

In the next section we will prove the result.
\section{Relative Energy and Weak Strong uniqueness}
\subsection{Relative Energy}
Following \cite{fei-jin-nov} and \cite{fei-2018-nm} (cf. the standard reference material by Dafermos \cite{Daf4}) we introduce \emph{relative energy functional}:

\begin{align}\label{rel ent weak}
	\Epsilon(t)=\Epsilon(\varrho,\vectoru \vert r,\vectorU)(t):= \int_{\Omega}\frac{1	}{2} \varrho \vert \vectoru-\vectorU\vert^2 + (H(\varrho)-H(r) -H^{\prime}(r)(\varrho -r)) (t,\cdot) \dx ,
\end{align} 
where $r,\vectorU$ are arbitrary test functions and $[\rho,\vectoru]$ in \eqref{rel ent weak} is weak solution of \eqref{eq:cont}-\eqref{cauchy_str} as in \eqref{defw}.
By direct calculation we can show that,
\begin{align}
	\begin{split}
		\Epsilon(\tau) &=\int_{ \Om} \big( \frac{1}{2}\varrho \vert \vectoru \vert ^2 + H(s) \big) \dx - \int_{ \Om} \varrho \vectoru  \cdot \vectorU \dx \\
		& \quad +\int_{ \Om} \frac{1}{2} \varrho  \vert \vectorU \vert^2 \dx - \int_{ \Om}\varrho H^\prime(r) \dx + \int_{ \Om} h(r) \dx = \Sigma_{i=1}^{5} K_i
	\end{split}
\end{align}

Now we look for the terms $K_i$ for $i=1(1)5$. First we note that $K_1$ can be evaluated 
by means of \eqref{modified_eng_ineq}. To compute $K_2$ we use \eqref{momentum_eqn_weak} and for $K_3,K_4$ we use \eqref{continuity_eqn_weak}. Calculating above terms we get,
\begin{align}\label{re1}
	\begin{split}
		[\mathcal{E}(t)]_{t=0}^{t=\tau} + & \int_0^\tau \int_{ \Om} \mathbb{S}(\nabla_x \vectoru) : (\nabla_x \vectoru -\nabla_x \vectorU) \;\dx \dt \\
		\leq & - \int_0^{\tau} \int_{ \Om} \varrho \vectoru \cdot \partial_{t}\vectorU \dx \dt \\
		& - \int_0^{\tau} \int_{ \Om} [\varrho\vectoru \otimes \vectoru  : \nabla_x \vectorU + h(\varrho)\Div\vectorU ] \dx \dt\\
		&+\int_0^{\tau} \int_{ \Om} [\varrho \vectorU \cdot \partial_{t} \vectorU + \varrho \vectoru \cdot (\vectorU \cdot \nabla_x)\vectorU] \dx \dt\\
		&+\int_{0}^{\tau} \int_{ \Om}\bigg[ (1-\frac{\varrho}{r}) h^\prime(r) \partial_t r - \varrho \vectoru \cdot \frac{h^\prime(r)}{r} \nabla_x r \bigg] \dx \dt\\
		&-\int_{0}^{\tau} \int_{ \Om} q(\varrho) \Div \vectorU \dx \dt+\int_{0}^{\tau} \int_{ \Om} q(\varrho) \Div \vectoru \dx \dt\\
	\end{split}
\end{align}

Now if we assume $[r,\vectorU]$ satisfies \eqref{eq:cont}-\eqref{cauchy_str}, and these are smooth solution with $r>0$ then we have,

\begin{align}\label{re2}
	\begin{split}
		[\mathcal{E}(t)]_{t=0}^{t=\tau} + & \int_0^\tau \int_{ \Om} \mathbb{S}(\nabla_x \vectoru -\nabla_x \vectorU) : (\nabla_x \vectoru -\nabla_x \vectorU) \;\dx \dt  \\
		&\leq -\int_0^\tau \int_{ \Om} \mathbb{S}(\nabla_x \vectorU) : (\nabla_x \vectoru -\nabla_x \vectorU) \;\dx \dt \\
		& - \int_0^{\tau} \int_{ \Om} (\varrho \vectoru -\varrho \vectorU)  \cdot (-(\vectorU\cdot\nabla_x)\vectorU -\frac{1}{r}\nabla_x p(r) + \frac{1}{r} \Div \mathbb{S}(\nabla_x \vectorU ) )\dx \dt \\
		&  + \int_{0}^{\tau} \int_{ \Om}  h(\varrho) \Div\vectorU  \dx \dt\\
		&+\int_{0}^{\tau} \int_{ \Om}\bigg[ (1-\frac{\varrho}{r}) h'(r) \partial_t r - \varrho \vectoru  \cdot \frac{h'(r)}{r} \nabla_x r \bigg] \dx \dt\\
		&-\int_{0}^{\tau} \int_{ \Om}  q(\varrho)\Div \vectorU \dx \dt+\int_{0}^{\tau} \int_{ \Om} q(\varrho) \Div \vectoru \dx \dt.
	\end{split}
\end{align}

Thus we have,
\begin{align}\label{rel_ent_REM}
\begin{split}
[\mathcal{E}(t)]_{t=0}^{t=\tau} +& \int_0^\tau \int_{ \Om} \mathbb{S}(\nabla_x \vectoru- \nabla_x \vectorU) : (\nabla_x \vectoru -\nabla_x \vectorU) \;\dx \dt \\
& \leq \int_0^\tau \int_{ \Om}  (\frac{\varrho}{r}-1) (\vectorU- \vectoru)\; ( \Div \mathbb{S}(\nabla_x \vectorU)-\nabla_x q(r))\; \dx\dt\\
&  +\int_0^\tau \int_{ \Om} \varrho (\vectoru-\vectorU)\cdot ((\vectorU -\vectoru)\cdot \nabla_x)\mathbf{U} \;\dx \dt\\
&+\int_0^\tau \int_{ \Om}(- h(\varrho) +h(r)+ h'(r)(\varrho -r))\; \Div \vectorU\;   \dx \dt\\
&+ \int_0^{\tau} \int_{\Om} (\Div \vectoru -\Div \vectorU)(q(\varrho)-q(r))\; \dx \dt= \Sigma_{i=1}^{4} \mathcal{L}_i.
\end{split}
\end{align} 
Here $\mathcal{L}_i$ for $i=1(1)4$ have been termed as \textit{remainder terms}.\\

We know that for our choice of interrelation between pressure and density we have,

\begin{Lemma}\label{p by P 1}
	Suppose $H$  is defined as \eqref{P-defn} and $r$ lies on a compact subset of $(0,\infty)$ then we have,
	\begin{align}
		H(\varrho)-H(r)-H^{\prime}(r)(\varrho -r) \geq c(r)  
		\begin{cases}
			&(\varrho -r)^2 \text{ for } r_1 \leq \varrho \leq r_2,\\
			&(1+ \varrho^{\gamma})  \text{ otherwise }\\
		\end{cases},
	\end{align}
	where, $c(r)$ is uniformly bounded for $r$ belonging to compact subsets of $(0,\infty)$. 
\end{Lemma}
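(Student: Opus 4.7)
The plan is to exploit strict convexity of $H$ together with its coercive growth at infinity. Set
\[
\Psi(\varrho, r) := H(\varrho) - H(r) - H'(r)(\varrho - r).
\]
Since $\varrho H''(\varrho) = h'(\varrho) > 0$ on $(0,\infty)$, the potential $H$ is strictly convex on $(0,\infty)$, so $\Psi(\cdot,r) \geq 0$ with equality only at $\varrho = r$. I fix a compact set $K \subset (0,\infty)$ in which $r$ is allowed to vary and choose $0 < r_1 < \inf K \leq \sup K < r_2 < \infty$; all constants below will depend only on $K$.

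For the middle range $\varrho \in [r_1, r_2]$, Taylor's formula gives $\Psi(\varrho, r) = \tfrac{1}{2} H''(\xi)(\varrho - r)^2$ for some $\xi \in [r_1, r_2]$. Since $H''$ is continuous and strictly positive on $[r_1, r_2]$, it is bounded below by a constant $\kappa_1(K) > 0$, which yields the first branch of the claim.

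For the large-$\varrho$ regime I invoke the hypothesis $\liminf_{\varrho \to \infty} h(\varrho)/\varrho^{\gamma} > 0$: integrating $h(z)/z^{2}$ from $1$ to $\varrho$, one obtains $H(\varrho) \geq \kappa_2 \varrho^{\gamma}$ for $\varrho$ sufficiently large when $\gamma > 1$, and $H(\varrho) \gtrsim \varrho \log \varrho$ (hence still dominates $\varrho^{\gamma}=\varrho$) when $\gamma = 1$. The correction $H(r) + H'(r)(\varrho - r)$ grows only linearly in $\varrho$, uniformly for $r \in K$, and is therefore absorbed into a fraction of the leading term; after enlarging $r_2$ if necessary one gets $\Psi(\varrho, r) \geq \kappa_3(K)(1 + \varrho^{\gamma})$ on $[r_2, \infty)$.

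Finally, on $[0, r_1]$ the bound $\Psi(\varrho, r) \geq \kappa_4(K)$ follows from strict positivity of $\Psi$ and a compactness argument on the closed set $\{(\varrho, r) : 0 \leq \varrho \leq r_1,\ r \in K\}$, with the convention $\Psi(0, r) = +\infty$ should the integral defining $H$ diverge at $0$ (in which case the inequality is immediate near zero). As $\varrho^{\gamma} \leq r_1^{\gamma}$ on this interval, one may absorb the factor $1 + \varrho^{\gamma}$ into the constant, producing the second branch. Taking $c(r)$ to be the minimum of $\kappa_1, \kappa_3, \kappa_4/(1 + r_1^{\gamma})$ finishes the proof. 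The only mildly delicate point is the $\gamma = 1$ case at infinity, where the leading term is $\varrho\log\varrho$ rather than a clean $\varrho^{\gamma}$-power, but this still dominates the linear correction uniformly on $K$; beyond that the argument is a routine convexity-plus-growth exercise.
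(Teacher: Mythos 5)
Your proposal is correct. Bear in mind that the paper gives no proof of Lemma \ref{p by P 1} at all — it only points to \cite{fei-nov-sun-2011} and \cite{fei-jin-nov} — and your self-contained argument is essentially the standard one from those references: strict convexity via $H''(\varrho)=h'(\varrho)/\varrho>0$ and Taylor's theorem on the compact middle range, coercivity $H(\varrho)\gtrsim\varrho^{\gamma}$ (respectively $\varrho\log\varrho$ when $\gamma=1$) at infinity absorbing the affine correction $H(r)+H'(r)(\varrho-r)$ uniformly for $r$ in the compact set, and positivity plus compactness on $[0,r_1]$. Two small points are worth tightening. First, the ``convention $\Psi(0,r)=+\infty$'' is unnecessary and in fact mis-signed: since $H(\varrho)<0$ on $(0,1)$, any divergence at $0^{+}$ could only be towards $-\infty$; but under \eqref{p- condition2} one has $h\in C^{1}[0,\infty)$ with $h(0)=0$, hence $h(z)\leq Lz$ near zero, $|H(\varrho)|\leq L\varrho\log(1/\varrho)\to 0$, so $\Psi(\cdot,r)$ extends continuously to $\varrho=0$ with $\Psi(0,r)=rH'(r)-H(r)=h(r)>0$, and your compactness argument applies directly without any convention. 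Second, Remark \ref{p byP rem} fixes $r_{2}$ in terms of $\sup r$ only, so rather than ``enlarging $r_{2}$'' you should keep it fixed and note that on the intermediate compact interval between $r_{2}$ and the threshold where your growth estimate kicks in, $1+\varrho^{\gamma}$ is bounded while $\Psi$ is bounded below by a positive constant (e.g.\ by the quadratic bound, or by continuity and strict positivity), which yields the second branch there as well; your asymptotic estimate then covers the remaining unbounded range. With these one-line repairs the proof is complete, with constants depending only on the compact set containing $r$, exactly as the lemma requires.
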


Hence for \eqref{p-condition} and \eqref{p- condition2}  we have,
\begin{Lemma}\label{p by P 2}
	For $\varrho\geq 0$,
	\begin{align}
		\vert h(\varrho)-h(r) -h'(r) (\varrho -r) \vert \leq C(r)(  H(\varrho)-H(r)-H'(r)(\varrho -r) ),
	\end{align}
	where $C(r)$ is uniformly bounded if $r$ lies in some compact subset of $(0,\infty)$.
\end{Lemma}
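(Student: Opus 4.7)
The plan is to split the proof into two regimes depending on whether $\varrho$ lies in a compact neighborhood $[r_1, r_2] \subset (0, \infty)$ of $r$, or outside this neighborhood, and in each regime to match the claimed upper bound against the lower bound already produced in Lemma~\ref{p by P 1}. The ratio $|h(\varrho) - h(r) - h'(r)(\varrho - r)|/(H(\varrho) - H(r) - H'(r)(\varrho - r))$ behaves very differently in the two regimes, and Lemma~\ref{p by P 1} is tailored to exactly that dichotomy.

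First, for $\varrho \in [r_1, r_2]$, I would use second-order Taylor's formula
\[
h(\varrho) - h(r) - h'(r)(\varrho - r) = \int_r^{\varrho} (\varrho - s)\, h''(s)\, ds,
\]
which is legitimate since under either pressure law $h$ is smooth on $(0,\infty)$: for \eqref{p-condition} this is immediate from $h(\varrho) = a\varrho^\gamma$, and under \eqref{p- condition2} the identity $\varrho H''(\varrho) = h'(\varrho)$ together with the $C^1$ character of $h$ gives at least the needed pointwise control on compact subsets of $(0,\infty)$. Since $r_1 > 0$ and $h''$ is continuous and bounded on $[r_1, r_2]$, this yields $|h(\varrho) - h(r) - h'(r)(\varrho - r)| \leq C(r)(\varrho - r)^2$, and combining with $H(\varrho) - H(r) - H'(r)(\varrho - r) \geq c(r)(\varrho - r)^2$ from Lemma~\ref{p by P 1} closes this regime with constant $C(r)/c(r)$.

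Next, for $\varrho \notin [r_1, r_2]$, I would estimate crudely
\[
|h(\varrho) - h(r) - h'(r)(\varrho - r)| \leq |h(\varrho)| + |h(r)| + |h'(r)|\,|\varrho - r|,
\]
use the polynomial growth $|h(\varrho)| \leq C(1 + \varrho^\gamma)$ (immediate for $h = a\varrho^\gamma$ in case \eqref{p-condition}, and a consequence of the growth structure of \eqref{p- condition2}), and combine with the trivial inequality $|\varrho - r| \leq C(r)(1 + \varrho) \leq C(r)(1 + \varrho^\gamma)$ valid for $\gamma \geq 1$. This produces an upper bound of the form $C(r)(1 + \varrho^\gamma)$, which is matched by the second branch $H(\varrho) - H(r) - H'(r)(\varrho - r) \geq c(r)(1 + \varrho^\gamma)$ of Lemma~\ref{p by P 1}. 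Taking $C(r)$ to be the maximum of the constants produced in the two regimes completes the proof, and the uniformity in $r$ on compact subsets of $(0,\infty)$ is inherited from the corresponding uniformity in Lemma~\ref{p by P 1}.

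The main obstacle is the first regime: obtaining the quadratic upper bound on the remainder of $h$ relies on a bounded second derivative of $h$ on compact subsets of $(0,\infty)$, which is automatic for the isentropic case but only implicit under \eqref{p- condition2}; once this regularity is secured, the second regime is essentially a matching of polynomial growth rates between $h$ and $H$.
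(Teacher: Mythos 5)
Your two--regime strategy (Taylor expansion near $r$, growth matching far from $r$, both measured against the lower bounds of Lemma \ref{p by P 1}) is exactly the standard argument behind this lemma; the paper itself does not reprove it but defers to \cite{fei-nov-sun-2011} and \cite{fei-jin-nov}, where the pressure is power--like and smooth, and in that setting (in particular for \eqref{p-condition}, where $h=a\varrho^\gamma$) your proof is fine. The gaps concern the general law \eqref{p- condition2}, which you claim to cover but do not.

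First, in the near regime the quadratic bound $|h(\varrho)-h(r)-h'(r)(\varrho-r)|\le C(r)(\varrho-r)^2$ requires $h'$ to be locally Lipschitz (equivalently $h''$ locally bounded), and the justification you offer --- that $\varrho H''(\varrho)=h'(\varrho)$ together with $h\in C^1$ gives ``the needed pointwise control'' --- is a non sequitur: that identity controls $H''$, not $h''$. This is not cosmetic: if $h'$ has a worse-than-Lipschitz modulus of continuity at some $r_0>0$, say $h'(s)=1+|s-r_0|^{1/2}$ locally, then with $r=r_0$ the left-hand side is of order $|\varrho-r|^{3/2}$ while $H(\varrho)-H(r)-H'(r)(\varrho-r)=\int_r^\varrho(\varrho-s)\,h'(s)/s\,\mathrm{d}s$ is of order $(\varrho-r)^2$, so the claimed inequality fails for every finite $C(r)$; the lemma therefore uses regularity beyond the literal $C^1$ hypothesis of \eqref{p- condition2}, and any proof must state and invoke it. Second, in the far regime you use $|h(\varrho)|\le C(1+\varrho^\gamma)$ as ``a consequence of the growth structure of \eqref{p- condition2}'', but \eqref{p- condition2} only posits the one-sided bound $\liminf_{\varrho\to\infty}h(\varrho)/\varrho^\gamma>0$; an upper bound of the same order is an additional assumption (implicit in the paper's remark that $H(\varrho)\approx\varrho^\gamma$ and in the cited references). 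Without some control of the type $h(\varrho)\le C(1+H(\varrho))$ for large $\varrho$, matching against the branch $c(r)(1+\varrho^\gamma)$ of Lemma \ref{p by P 1} does not close: for instance for exponentially growing $h$ one has $H(\varrho)=o(h(\varrho))$ as $\varrho\to\infty$, and the ratio in the lemma is unbounded. With these two hypotheses made explicit --- $h''$ locally bounded on $(0,\infty)$ and $h(\varrho)\lesssim 1+\varrho^\gamma$ --- your argument is complete and coincides with the one in the works the paper cites.
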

\begin{proof}
	The proof of both lemmas have been discussed in \cite{fei-nov-sun-2011} and \cite{fei-jin-nov}.
\end{proof}
\begin{Rem}\label{p byP rem}
	In our case we choose $r_1,r_2$ such that they satisfy, $r_1 < \frac{\inf\limits_{(x,t)\in (0,T)\times \Om} r(x,t)}{2}$, $ r_2 > 2 \times {\sup\limits_{(x,t)\in (0,T)\times \Om} r(x,t)}$ and $ 1 + \varrho^\gamma \geq \max\{ \varrho, \varrho^2 \},\; \forall \varrho \geq r_2$. 
\end{Rem}

\subsection{Weak strong uniqueness}
Now we want to compute remainder terms i.e. $\mathcal{L}_i$ for $i=1,2,3,4$. For $\mathcal{L}_2$ we have,
\begin{align}\label{L_2}
\begin{split}
 \int_0^\tau \int_{ \Om} \varrho (\vectoru-\vectorU)\cdot ((\vectorU -\vectoru)\cdot \nabla_x)\mathbf{U} \;\dx \dt \leq \Vert \nabla_x \vectorU \Vert_{C([0,T]\times \Om)} \int_0^\tau \mathcal{E}(t) \dt. 
\end{split}
\end{align}
Next for $\mathcal{L}_3$ we use lemma \eqref{p by P 2} and obtain,

\begin{align}\label{L_3}
\begin{split}
\int_0^\tau \int_{ \Om}(- h(\varrho) & +h(r)+ h'(r)(\varrho -r))\; \Div \vectorU\;   \dx \dt \\
& \leq \Vert \nabla_x \vectorU \Vert_{C([0,T]\times \Om)}  \int_0^\tau \mathcal{E}(t) \dt.
\end{split}
\end{align}

Now we focus on $\mathcal{L}_1$ and $\mathcal{L}_4$. Since $q$ is globally Lipschitz by \textit{Rademacher Theorem} $q$ is almost everywhere differentiable and its derivative is less than the Lipschitz constant $L_q$.
Hence we obtain,
\begin{align*}
\vert \frac{1}{r} \nabla_x q(r) \vert \leq  \frac{L_q}{\inf r}\Vert r \Vert_{C^1}
\end{align*}
Consider $\psi \in C_c^{\infty}(0,\infty),\; 0\leq \psi \leq 1,\; \psi(s)=1 \text{ for }s\in (r_1,r_2)$. 
Then we have, 
\begin{align*}
&(\varrho-r)(\vectorU-\vectorv)=\psi(\varrho)(\varrho-r)(\vectorU-\vectorv)+(1-\psi(\varrho))(\varrho-r)(\vectorU-\vectorv).
\end{align*}

Consequently, we obtain 
\begin{align}
\begin{split}
 \psi(\varrho) (\varrho-r)(\vectorU-\vectorv)  \leq \frac{1}{2} \frac{\psi^2(\varrho)}{\sqrt{\varrho}} (\varrho-r)^2  + \frac{1}{2}  \frac{\psi^2(\varrho)}{\sqrt{\varrho}} \varrho \vert \vectorU-\vectoru \vert^2.
\end{split}
\end{align}

Now using  that $\psi$ is compactly supported in $(0,\infty)$ and  lemma \eqref{p by P 1} we control both the terms by $\mathcal{E}(\cdot)$. Thus we have,
\begin{align}\label{psi}
\begin{split}
&\int_0^{\tau} \int_{ \Om}  \psi(\varrho) (\varrho-r)(\vectorU - \vectoru) \cdot \frac{1}{r}\big( \Div \mathbb{S}(\nabla_x \vectorU)- \nabla_x q(r) \big) \dx \dt \\
& \leq ( \Vert \frac{1}{r}\big( \Div \mathbb{S}(\nabla_x \vectorU) \big) \Vert_{C([0,T]\times \bar{\Om}; \R^{d})} + \frac{L_q}{\inf r}\Vert r \Vert_{C^1([0,T]\times \bar{\Om}; \R^{d})}) \int_{0}^{\tau} \Epsilon(t) \dt
\end{split}
\end{align}

We rewrite $1-\psi(\varrho)=w_1(\varrho)+w_2(\varrho)$, where $\text{supp}(w_1)\subset [0,r_1)$ and $\text{supp}(w_2)\subset (0,r_2]$,

\begin{align*}
& (1-\psi(\varrho)) (\varrho-r)(\vectorU-\vectoru) = (w_1(\varrho)+w_2(\varrho)) (\varrho-r)(\vectorU-\vectoru) .
\end{align*}

For $\delta>0$ we obtain,
\begin{align*}
w_1(\varrho) (\varrho-r)(\vectorU-\vectoru) \leq C(\delta) w_1^2(\varrho)(\varrho-r)^2   + \delta  \vert \vectorU - \vectoru \vert^2 .
\end{align*}
Thus using Poincar{\'e} inequality we have,
\begin{align}\label{w_1}
\begin{split}
&\int_0^{\tau} \int_{ \Om}  w_1(\varrho) (\varrho-r)(\vectorU - \vectorv) \cdot \frac{1}{r}\big( \Div \mathbb{S}(\nabla_x \vectorU)- \nabla_x q(r) \big) \dx \dt\\
&\leq C( \Vert \frac{1}{r}\big( \Div \mathbb{S}(\nabla_x \vectorU) \big) \Vert_{C([0,T]\times \bar{\Om}; \R^{d})} + \frac{L_q}{\inf r}\Vert r \Vert_{C^1([0,T]\times \bar{\Om}; \R^{d})}, \delta) \int_{0}^{\tau} \Epsilon(t) \dt\\
&\quad + \delta \int_0^\tau \int_{ \Om} \mathbb{S}(\nabla_x \vectoru -\nabla_x \vectorU) : (\nabla_x \vectoru -\nabla_x \vectorU) \;\dx \dt  .
\end{split}
\end{align}
Next,
\begin{align*}
w_2(\varrho) (\varrho-r)(\vectorU-\vectorv) \leq C(r) (\varrho   + \varrho  \vert \vectorU - \vectoru \vert^2) .
\end{align*}
Using remark of Lemma \eqref{p byP rem} we obtain
\begin{align}\label{w_2}
\begin{split}
&\int_0^{\tau} \int_{ \Om}  w_2(\varrho) (\varrho-r)(\vectorU - \vectoru) \cdot \frac{1}{r}\big( \Div \mathbb{S}(\nabla_x \vectorU)- \nabla_x q(r) \big) \dx \dt \\
& \leq C ( \Vert \frac{1}{r}\big( \Div \mathbb{S}(\nabla_x \vectorU) \big) \Vert_{C([0,T]\times \bar{\Om}; \R^{d})} + \frac{L_q}{\inf r}\Vert r \Vert_{C^1([0,T]\times \bar{\Om}; \R^{d})}) \int_{0}^{\tau} \Epsilon(t) \dt
\end{split}
\end{align}

So combining \eqref{psi}, \eqref{w_1} and \eqref{w_2} we obtain

\begin{align}\label{L_1}
\begin{split}
&\int_0^{\tau} \int_{ \Om}   (\varrho-r)(\vectorU - \vectorv) \cdot \frac{1}{r}\big( \Div \mathbb{S}(\nabla_x \vectorU)- \nabla_x q(r) \big) \dx \dt\\
&\leq C(\delta,r,\vectorU,q) \int_{0}^{\tau} \Epsilon(t) \dt + \delta \int_0^\tau \int_{ \Om} \mathbb{S}(\nabla_x \vectoru -\nabla_x \vectorU) : (\nabla_x \vectoru -\nabla_x \vectorU) \;\dx \dt  .
\end{split}
\end{align}

Next is term $\mathcal{I}_4$,

\begin{align*}
&\int_0^{\tau} \int_{\Om} (\Div \vectoru -\Div \vectorU)(q(\varrho)-q(r))\; \dx \dt \\& \leq  C(\delta) L_q \int_0^{\tau} \int_{\Om} (\varrho-r)^2 \dx \dt + \delta \int_0^{\tau} \int_{\Om} \vert \Div \vectorU - \Div \vectoru \vert^2 \dx\dt .
\end{align*}
Note that, by virtue of our choice of the no--slip 
boundary conditions \eqref{noslip} the last integral is controlled by
\[
\int_0^\tau \int_{ \Om} \mathbb{S}(\nabla_x \vectoru -\nabla_x \vectorU) : (\nabla_x \vectoru -\nabla_x \vectorU) \;\dx \dt.
\]

Using a similar argument as in the earlier case we can say that,
\begin{align}\label{L_4}
\begin{split}
&\int_0^{\tau} \int_{ \Om}   (\Div \vectoru -\Div \vectorU)(q(\varrho)-q(r))\; \dx \dt\\
&\leq C(\delta,r,q) \int_0^\tau \mathcal{E}(t) \dt + \delta \int_0^\tau \int_{ \Om} \mathbb{S}(\nabla_x \vectoru -\nabla_x \vectorU) : (\nabla_x \vectoru -\nabla_x \vectorU) \;\dx \dt .
\end{split}
\end{align}
Thus combining \eqref{L_2}, \eqref{L_3}, \eqref{L_1} and \eqref{L_4} and choosing $\delta$ small we obtain,
\begin{align}\label{re final 1}
[\mathcal{E}(t)]_{t=0}^{t=\tau} +& 
\frac{1}{2} \int_0^\tau \int_{ \Om} \mathbb{S}(\nabla_x \vectoru- \nabla_x \vectorU) : (\nabla_x \vectoru -\nabla_x \vectorU) \;\dx \dt\leq C(r,\vectorU,q) \int_{0}^{\tau} \mathcal{E}(t)\dt.
\end{align}

\subsection{End of the proof}
\begin{proof}[Proof of Theorem \eqref{theorem1}]
	As $C(r,\vectorU,q)$ in \eqref{re final 1} is uniformly bounded in $[0,T]$. Hence we apply Gr{\"o}nwall's inequality and using hypothesis on initial condition we obtain $\mathcal{E}=0$ a.e. in $(0,T)$.
\end{proof}

\part{Pressure following equation of state \eqref{pr-defn}}

{In this part, we focus on the problem endowed with the periodic boundary conditions \eqref{period}. Accordingly, 
the domain $\Omega$ is here and hereafter identified with the flat torus 
$\Omega = \left([-1,1]|_{\{ -1, 1 \}} \right)^d$.}
Now considering density-pressure interrelation \eqref{pr-defn} we will define weak solution and study the weak-strong uniqueness.

	\section{Dissipative Weak solution, Main result}
	
	We impose some hypothesis on the initial data as,
	\begin{align}\label{id}
	\begin{split}
	&\varrho(0,\cdot) =\varrho_{0} (\cdot) \text{ with } 0\leq \varrho_{0} < \bar{\varrho} \text{ in }
	\Om, \; \int_{\Om} H(\varrho_0) \dx < \infty,\\
	& \vectoru(0,\cdot) =\vectoru_{0} (\cdot), \; \int \frac{\vert u_0 \vert^2 }{\varrho_0} < \infty.
	\end{split}
	\end{align}
	Weak solution are defined as follows:
	\begin{Def}\label{def1}
		We say that $(\varrho, \vectoru)$ is a dissipative weak solution in $(0,T)\times \Omega$ to the system of equations 
		(\ref{eq:cont}--\ref{cauchy_str}), with the periodic boundary conditions \eqref{period}, supplemented with initial data \eqref{id}, if:
		\begin{itemize}
			\item $0\leq \varrho < \bar{\varrho}$ \text{ a.e. in }$(0,T)\times\Om$, $\varrho \in C_{w}([0,T];L^{\gamma}(\Om))$ for any $\gamma > 1$,\\ $ p(\varrho) \in L^{1}((0,T)\times \Omega)$, {$\vectoru \in L^2(0,T;W^{1,2} (\Omega; \mathbb{R}^d))$}, $\varrho \vectoru \in C_{w}([0,T];L^{2}(\Om;\mathbb{R}^d))$, $ \varrho \vert u \vert^2 \in L^{\infty}(0,T;L^{1}(\Om))$. 
			\item For any $\tau\in (0,T)$ and any test function $\varphi \in C^{\infty}([0,T]\times {\Om})$, one has
			\begin{align}
			\int_0^{\tau} \int_{\Om} [ \varrho \partial_t \varphi + \varrho \vectoru \cdot \nabla_x \varphi] \dx \dt = \int_{\Om} \varrho(\tau,\cdot) \varphi(\tau,\cdot) -\int_{\Om} \varrho_0 \varphi(0,\cdot)\dx.
			\end{align} 
			\item For any $\tau\in (0,T)$ and any test function $\pmb{\varphi} \in C^{\infty}([0,T]\times \Om;\mathbb{R}^d)$, one has
			\begin{align}
			\begin{split}
			\int_0^{\tau} &\int_{\Om} [\varrho \vectoru\cdot \partial_{t} \vectorphi + (\varrho \vectoru \otimes \vectoru : \nabla_x \vectorphi + p(\varrho) \Div \vectorphi -\mathbb{S}(\nabla_x \vectoru):\nabla_x \vectorphi] \dx \dt \\
			&= \int_{\Om} \varrho \vectoru(\tau,\cdot)\cdot \vectorphi(\tau,\cdot) \dx- \int_{\Om} \varrho_{0} \vectoru_{0} \cdot \vectorphi(0,\cdot) \dx.
			\end{split}
			\end{align}
		
			\item The continuity equation also holds in the sense of renormalized solutions:
			\begin{align}\label{renm_cont_eqn_weak}
			\begin{split}
			&\big[ \int_{ \Om} ( b(\varrho)) \varphi \dx\big]_{t=0}^{t=\tau} \\
			&= 
			\int_0^{\tau} \int_{\Om} [ b(\varrho) \partial_t \varphi +  b(\varrho) \vectoru \cdot \nabla_x \varphi+ (b(\varrho)-\varrho b'(\varrho)\Div \vectoru \varphi] \dx \dt ,
			\end{split}
			\end{align} 
			where, $\varphi \in C^{\infty}([0,T]\times {\Om})$for any $b\in C^1[0,\bar{\varrho})$ satisfying
			\begin{align}
			\vert b(s) \vert^2 + \vert b'(s)\vert^2 \leq C(1 + h(s)) \text{ for some constant } C \text{ and any } s\in [0,\bar{\varrho}).
			\end{align}
			\item For a.e. $\tau \in (0,T)$, the energy inequality holds:
			\begin{align}
			\begin{split}
			\int_{\Om} &\bigg[ \frac{1}{2} \varrho \vert \vectoru \vert^2 +P(\varrho)\bigg](\tau,\cdot) \dx + \int_{0}^{\tau} 
			{\int_{\Om}}
			\mathbb{S}(\nabla_x \vectoru) :\nabla_x \vectoru\dx \dt\\
			&\leq \int_{\Om}\bigg[ \frac{1}{2} \varrho_0 \vert \vectoru_0 \vert^2 + P(\varrho_0)\bigg] \dx ,
			\end{split}
			\end{align}
			where $P$ is given by,
			\begin{align} \label{pr potential defn}
			P(\varrho)=\varrho \int_{\frac{\bar{\varrho}}{2}}^{\varrho} \frac{p(z)}{z^2} \; \text{d}z.
			\end{align}
		\end{itemize}
	\end{Def}

\begin{Rem}
	We denote,
	 \begin{align*}
 H(\varrho)=\varrho \int_{\frac{\bar{\varrho}}{2}}^{\varrho} \frac{h(z)}{z^2} \; \text{d}z\text{ and }Q(\varrho)=\varrho \int_{\frac{\bar{\varrho}}{2}}^{\varrho} \frac{q(z)}{z^2} \; \text{d}z.
	\end{align*}
\end{Rem}
\begin{Rem}
	Throughout our discussion we have the following assumption near $\bar{\varrho}$:
	\begin{equation}\label{assumption near rhobar}
	\lim\limits_{\varrho\rightarrow \bar{\varrho}} h(\varrho)(\bar{\varrho}-\varrho)^{\beta} >0, \quad \text{ for some } \beta > \frac{5}{2}.
	\end{equation}
	This assumption is possibly technical but necessary for the analysis. Note that similar 
	assumption also ensures global existence, cf. Feireisl, Lu, Novotn\' y \cite{fei_lu_nov_hs}.
\end{Rem}

\subsection{Discussion of Definition}
Since $s\in \text{supp} (q)= [s_0, s_1]\subset (0, \bar{\varrho})$ implies,
\begin{align*}
\vert Q'(s) \vert ^2 + \vert Q (s) \vert^2 \leq \frac{1}{s_0} \vert Q(s) -q(s) \vert^2 + \vert Q(s) \vert^2 \leq M( \sup_{(0, \bar{\varrho})} \vert q \vert, \sup_{(0, \bar{\varrho})} \vert q' \vert ).
\end{align*}
Hence $Q$ satisfies hypothesis of $b$ as in \eqref{def1}. Thus from renormalized equation we have,
\begin{align}\label{q1}
\bigg[\int_{\Omega} Q(\varrho) (t,\cdot) \dx \bigg]_{t=0}^{t=\tau} = - \int_0^\tau\int_{ \Om} q(\varrho) \Div \vectoru \; \dx  \dt.
\end{align}
Since $[\varrho, \vectoru]$ is a renormalized 
dissipative weak solution, we obtain,
\begin{align*}
&\bigg[\int_{\Omega} \big(\frac{1}{2} \varrho \vert \vectoru \vert^2 + H(\varrho) \big)(t,\cdot) \dx \bigg]_{t=0}^{t=\tau} + \int_0^\tau \int_{ \Om} \mathbb{S}(\nabla_x \vectoru) : \nabla_x \vectoru \;\dx \dt \\
&\quad \; \leq  \int_0^\tau\int_{ \Om} q(\varrho) \Div \vectoru \; \dx  \dt.
\end{align*}
\begin{Rem}
	Instead of $ \vert b(\varrho) \vert^2 $ and $\vert b'(\varrho) \vert^2$ the same calculation can be done for $ \vert b(\varrho) \vert^{\frac{5}{2}} $ and $\vert b'(\varrho) \vert^{\frac{5}{2}}$.
\end{Rem}
\subsection{Main Result}
Here we state the main theorem,
\begin{Th}\label{maintheo}
		Let $\Om \subset \R^d$, $d=1,2,3$, be domain with periodic boundary condition. Let the pressure be given by \eqref{pr-defn}. Let $[\varrho,\vectoru]$ be finite energy weak solution  in the sense of definition \eqref{def1} which satisfies the pressure condition \eqref{assumption near rhobar} for some $\beta\geq3$. Let $[ r,\vectorU]$ be a classical solution of the 
	\textcolor{red}{\eqref{eq:cont}--\eqref{cauchy_str}, \eqref{period}}, i.e. $(r,\vectorU) \in C^{1}([0,T]\times \Om) \times C^1([0,T]; C^2(\Om))$ solves equation with same initial data as $(\varrho,\vectoru)$ and $0< r < \bar{\varrho}$. Then there holds,
	\begin{equation}
	(\varrho, \vectoru)= (r,\vectorU) \text{ in } (0,T)\times \Om
	\end{equation} 
\end{Th}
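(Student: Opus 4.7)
My plan is to mimic the structure of the proof of Theorem \ref{theorem1} in Part 1, working with the relative energy functional
\[
\mathcal{E}(t) = \int_{\Omega} \Bigl[\tfrac{1}{2}\varrho|\vectoru-\vectorU|^2 + H(\varrho) - H(r) - H'(r)(\varrho - r)\Bigr](t,\cdot)\,dx,
\]
with $H$ defined as in the remark following Definition \ref{def1}. Since the periodic setting admits constant test functions, all integrations by parts that were done in Part 1 go through without boundary contributions; the no--slip hypothesis was only essential in controlling $\|\Div(\vectoru-\vectorU)\|_{L^2}$, which here is replaced by the fact that the effective viscous flux still dominates $\|\Div(\vectoru - \vectorU)\|_{L^2}^2$ through the bulk viscosity coefficient $\lambda > 0$ in the Newtonian stress $\mathbb{S}$, regardless of mean value.

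\textbf{Deriving the relative energy inequality.} First I would reproduce the calculation leading from the modified energy inequality (obtained by taking $b = Q$ in the renormalized continuity equation, exactly as in the discussion preceding the main theorem) to an analogue of \eqref{rel_ent_REM}. Specifically, expanding $\mathcal{E}(\tau)$ as a sum $\sum_{i=1}^5 K_i$, using the energy inequality for $K_1$, the weak momentum balance tested against $\vectorU$ for $K_2$, and the renormalized continuity (valid for $b=|\vectorU|^2/2$ and $b=H'(r)$, both admissible since $r$ stays in a compact subset of $(0,\bar\varrho)$) for $K_3$, $K_4$, and then subtracting the classical identities satisfied by $[r,\vectorU]$, I expect to obtain an inequality of the form
\begin{align*}
[\mathcal{E}(t)]_0^\tau + \int_0^\tau\!\!\int_\Omega \mathbb{S}(\nabla_x(\vectoru-\vectorU))\!:\!\nabla_x(\vectoru-\vectorU)\,dx\,dt \le \sum_{i=1}^{4}\mathcal{L}_i,
\end{align*}
with $\mathcal{L}_i$ of the same shape as in Part 1, and an additional integral $\int_0^\tau\!\!\int_\Omega (\Div\vectoru-\Div\vectorU)(q(\varrho)-q(r))\,dx\,dt$ coming from the $Q$--renormalization. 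The terms $\mathcal{L}_2$ and $\mathcal{L}_3$ are controlled by $\|\nabla_x\vectorU\|_{C([0,T]\times\Omega)}\int_0^\tau\mathcal{E}(t)\,dt$ once an analogue of Lemma \ref{p by P 2} is available.

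\textbf{The pressure--potential estimates.} The core technical step is to establish, for the present $H$ with $\lim_{\varrho\to\bar\varrho}h(\varrho)(\bar\varrho-\varrho)^\beta>0$, the pointwise bounds
\[
H(\varrho)-H(r)-H'(r)(\varrho-r) \ge c(r)\begin{cases}(\varrho-r)^2,& r_1\le\varrho\le r_2,\\ 1,& 0\le \varrho<r_1,\\ (\bar\varrho-\varrho)^{1-\beta},& r_2<\varrho<\bar\varrho,\end{cases}
\]
together with $|h(\varrho)-h(r)-h'(r)(\varrho-r)| \le C(r)(H(\varrho)-H(r)-H'(r)(\varrho-r))$, where $r_1,r_2$ are chosen as in Remark \ref{p byP rem} but inside $(0,\bar\varrho)$. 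Such estimates are established in \cite{fei_lu_nov_hs, Fei-yong-nov-2018} under precisely the hypothesis $\beta\ge 3$; I would quote them and combine them with the compact support of $q$ in $(0,\bar\varrho)$. For $\mathcal{L}_1$ and $\mathcal{L}_4$, one splits $\varrho$ into the three zones via a cutoff $\psi$ supported in $(0,\bar\varrho)$: on $\{r_1\le\varrho\le r_2\}$ absorb by Young's inequality; on $\{\varrho<r_1\}$ use Poincar\'e--Korn on $\vectoru-\vectorU$ (legal in the periodic setting because one tests against a difference vanishing in mean after the usual orthogonal decomposition, and the dissipation still controls $\|\nabla_x(\vectoru-\vectorU)\|_{L^2}^2$); on $\{r_2<\varrho<\bar\varrho\}$ use that $q(\varrho)=0$ there and bound $|\varrho-r|\le \bar\varrho$ while $H(\varrho)-H(r)-H'(r)(\varrho-r)$ is bounded below by a positive constant.

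\textbf{Main obstacle and conclusion.} The hardest point will be the control near $\varrho=\bar\varrho$, where $h$ blows up: the contribution of the pressure potential has to dominate all remainder integrands whose density factor is merely uniformly bounded, which is exactly what the assumption \eqref{assumption near rhobar} with $\beta\ge 3$ (rather than $5/2$) buys us, since it gives $H(\varrho)-H(r)-H'(r)(\varrho-r) \to +\infty$ fast enough to absorb the singular part of $h$ and to swallow bounded expressions like $q(r)$, $\Div\vectorU$, $\nabla_x q(r)/r$. Once all remainder terms are estimated by $C(r,\vectorU,q)\int_0^\tau \mathcal{E}(t)\,dt + \delta\int_0^\tau\!\!\int_\Omega\mathbb{S}(\nabla_x(\vectoru-\vectorU)):\nabla_x(\vectoru-\vectorU)\,dx\,dt$ with $\delta$ small, the dissipation is absorbed on the left and Gr\"onwall's lemma, combined with the matching of initial data $\mathcal{E}(0)=0$, yields $\mathcal{E}\equiv 0$ on $[0,T]$, which forces $\varrho=r$ and $\varrho\vectoru=r\vectorU$, hence $\vectoru=\vectorU$ on $\{r>0\}=\Omega$.
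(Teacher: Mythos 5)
Your scaffolding (relative energy, zone decomposition, Gr\"onwall) matches the paper's Part~2, but there is a genuine gap precisely at the point where the hard--sphere case differs from Part~1. You assert the pointwise bound $\vert h(\varrho)-h(r)-h'(r)(\varrho-r)\vert \le C(r)\,(H(\varrho)-H(r)-H'(r)(\varrho-r))$ for all $\varrho<\bar{\varrho}$, and in your last paragraph you claim that $\beta\ge 3$ makes $H$ blow up ``fast enough to absorb the singular part of $h$''. This is false: with $h(\varrho)\approx(\bar{\varrho}-\varrho)^{-\beta}$ and $H$ given by \eqref{pr potential defn}, one has $H(\varrho)\approx(\bar{\varrho}-\varrho)^{1-\beta}$, i.e.\ one power weaker than $h$, so $h(\varrho)/H(\varrho)\approx(\beta-1)\bar{\varrho}/(\bar{\varrho}-\varrho)\to\infty$ as $\varrho\to\bar{\varrho}$ for \emph{every} $\beta$. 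Hence no analogue of Lemma \ref{p by P 2} holds on $\{\varrho\ge\bar{\varrho}-\alpha_1\}$ (the paper states this explicitly in the discussion of Lemma \ref{Lemma pressure bound}), and the remainder $\int(-h(\varrho)+h(r)+h'(r)(\varrho-r))\,\Div\vectorU\,\dx$ cannot be bounded by $C\,\mathcal{E}(t)$ there: your device of bounding quantities near $\bar{\varrho}$ by a constant works for the terms involving $q$ and $\varrho-r$, but not for the term containing the unbounded $h(\varrho)$ itself. Also, the hypothesis $\beta\ge 3$ is not used where you place it, and the references you quote do not provide the $h$--by--$H$ domination you invoke.

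The missing idea, which is the core of the paper's proof, is to augment the relative energy inequality by the extra nonnegative term $\int_0^{\tau}\int_{\Om} b(\varrho)h(\varrho)\,\dx\dt$ on the left--hand side, obtained by testing the momentum equation with $\invdiv\big(b(\varrho)-\langle b(\varrho)\rangle\big)$ (this is where periodicity is genuinely exploited), with $b$ chosen as in \eqref{b}: $b\equiv 0$ for $\varrho\le\bar{\varrho}-\alpha_1$ and $b(s)=-\log(\bar{\varrho}-s)$ near $\bar{\varrho}$, normalized through \eqref{alphatwo choice} so that $b(\varrho)h(\varrho)$ dominates $h(\varrho)\vert\Div\vectorU\vert$ on the dangerous set. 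This yields the modified inequality \eqref{relative entropy modified} with the additional remainders $\mathcal{R}_2,\mathcal{R}_3$ (terms $\mathcal{I}_5$ to $\mathcal{I}_{15}$), and it is in estimating those commutator-- and convective--type terms (in particular $\mathcal{I}_{10}$--$\mathcal{I}_{13}$, via \eqref{b by pr potential}--\eqref{b' by pr potential}) that the strengthened condition $\beta\ge 3$, rather than $\beta>5/2$, is actually needed. Without this inverse--divergence step the term $\mathcal{J}_5$ cannot be absorbed and your Gr\"onwall argument does not close.
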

The next two sections are devoted to the proof of Theorem \eqref{maintheo}.

	\section{Relative Energy}
	We define the relative energy functional:
	\begin{align}\label{ent_func}
	\Epsilon(t)=\Epsilon(\varrho,\vectoru \vert r,\vectorU)(t):= \int_{\Omega}\frac{1	}{2} \varrho \vert \vectoru-\vectorU\vert^2 + (H(\varrho)-H(r) -H^{'}(r)(\varrho -r)) (t,\cdot) \dx .
	\end{align} 
	We rewrite the entropy functional as 
	\begin{align*}
	\mathcal{E}(t)= &\int_{\Omega} \big(\frac{1}{2} \varrho \vert \vectoru \vert^2 + H(\varrho) \big) \dx - \int_{\Om} \varrho \vectoru \cdot \vectorU \dx \\& 
	\hspace{5mm}+ \int_{\Om} \varrho \big(\frac{1}{2}\vert \vectorU \vert^2 -H^{'}(r)\big) \dx + \int_{\Om} \big( rH^{'}(r)-H(r) \big) \dx.\\
	\end{align*}
Next we follow the similar lines as in Section 3. We assume $[r,\vectorU]\in C^{1}([0,T]\times \Om)\times C^{1}(0,T; C^{2}(\Om))$ is classical solution of \eqref{eq:cont}-\eqref{cauchy_str}, \eqref{period} and pressure law \eqref{pr-defn} with  $0<r<\bar{\varrho}$. Hence we obtain,
		
	\begin{align*}
	[\mathcal{E}(t)]_{t=0}^{t=\tau} +& \int_0^\tau \int_{ \Om} \mathbb{S}(\nabla_x \vectoru- \nabla_x \vectorU) : (\nabla_x \vectoru -\nabla_x \vectorU) \;\dx \dt \\
	& \leq \int_0^\tau \int_{ \Om}  (\frac{\varrho}{r}-1) (\vectorU- \vectoru)\; ( \Div \mathbb{S}(\nabla_x \vectorU)-\nabla_x q(r))\; \dx\dt\\
	&  +\int_0^\tau \int_{ \Om} \varrho (\vectoru-\vectorU)\cdot ((\vectorU -\vectoru)\cdot \nabla_x)\mathbf{U} \;\dx \dt\\
	&+\int_0^\tau \int_{ \Om}(- h(\varrho) +h(r)+ h'(r)(\varrho -r))\; \Div \vectorU\;   \dx \dt\\
	&+ \int_0^{\tau} \int_{\Om} (\Div \vectoru -\Div \vectorU)(q(\varrho)-q(r))\; \dx \dt.
	\end{align*} 
	Next, we state a lemma which indicates the difference in the proof of Theorem \eqref{maintheo} with Theorem \eqref{theorem1}. 
	\begin{Lemma}\label{Lemma pressure bound}
		Let $\varrho \geq 0$ and $ 0 < \alpha_0\leq r \leq \bar{\varrho}-\alpha_0 < \bar{\varrho}$. There exists $\alpha_1\in(0,\alpha_0)$ and a constant $c>0$, such that
		\begin{align}
		H(\varrho)-H(r)-H'(r)(\varrho -r)\geq\begin{cases}
		c(\varrho -r )^2,\; &\text{if } \alpha_1 \leq \varrho \leq \bar{\varrho} -\alpha_1,\\
		\frac{h(r)}{2},\; &\text{if }
		0\leq \varrho \leq \alpha_1 ,\\
		\frac{H (\varrho)}{2},\; &\text{if }\bar{\varrho}-\alpha_1 \leq \varrho < \bar{\varrho}.
		\end{cases}
		\end{align}
		We also have,
		\begin{align}
		h(\varrho)-h(r)-h'(r)(\varrho -r)\leq\begin{cases}
		c(\varrho -r )^2,\; &\text{if } \alpha_1 \leq \varrho \leq \bar{\varrho} -\alpha_1,\\
		1+h'(r)r -h(r),\; &\text{if }
		0\leq \varrho \leq \alpha_1 ,\\
		2h(\varrho),\; &\text{if }\bar{\varrho}-\alpha_1 \le \varrho < \bar{\varrho}.
		\end{cases}
		\end{align}
	\end{Lemma}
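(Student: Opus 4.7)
The strategy mirrors that of Lemma \ref{p by P 1}, exploiting the strict convexity $H''(\varrho) = h'(\varrho)/\varrho > 0$ of the pressure potential on $(0,\bar{\varrho})$, together with the endpoint identity $\varrho H'(\varrho) - H(\varrho) = h(\varrho)$. Writing $f_r(\varrho) := H(\varrho) - H(r) - H'(r)(\varrho - r)$, I treat three regimes separately, and the final $\alpha_1$ will be the minimum of three candidate values, one from each regime.

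\emph{Middle regime.} For $\varrho, r$ confined to the compact set $[\alpha_1, \bar{\varrho} - \alpha_1]$, continuity and positivity of $h'$ make $H''$ bounded below by some positive $2c$; Taylor's formula with integral remainder, $f_r(\varrho) = \int_r^\varrho (\varrho - s)\, H''(s)\, ds$, then delivers $f_r(\varrho) \geq c(\varrho - r)^2$. The matching upper bound $h(\varrho) - h(r) - h'(r)(\varrho - r) \leq c(\varrho - r)^2$ follows from the same Taylor expansion applied to $h$ itself, which requires $h \in C^2$ on the open interval, an implicit smoothness assumption compatible with the Van der Waals and Carnahan--Starling models.

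\emph{Low-density regime $\varrho \in [0, \alpha_1]$.} Convexity of $H$ makes $f_r$ decreasing on $[0, r]$, so it suffices to control $f_r(\alpha_1)$ from below. At the left endpoint, $H(0) = 0$ (because $h(z) = O(z)$ near zero forces $\varrho \int_{\bar{\varrho}/2}^\varrho h(z)/z^2\, dz \to 0$), so the identity $r H'(r) - H(r) = h(r)$ gives $f_r(0) = h(r)$. Combining uniform continuity of $f_r$ in $\varrho$ with the positive lower bound $\min_{[\alpha_0, \bar{\varrho} - \alpha_0]} h > 0$, one picks $\alpha_1 > 0$ independent of $r$ so that $f_r(\alpha_1) \geq h(r)/2$. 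For the companion $h$-bound, a direct rearrangement reduces the claim to $h(\varrho) - h'(r)\varrho \leq 1$, which is ensured by further shrinking $\alpha_1$ so that $h(\alpha_1) \leq 1$.

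\emph{High-density regime $\varrho \in [\bar{\varrho} - \alpha_1, \bar{\varrho})$.} The lower bound $f_r(\varrho) \geq H(\varrho)/2$ is equivalent to $H(\varrho) \geq 2H(r) + 2H'(r)(\varrho - r)$. Since $H, H'$ are uniformly bounded on $[\alpha_0, \bar{\varrho} - \alpha_0]$ and $|\varrho - r| \leq \bar{\varrho}$, the right-hand side is dominated by some $M = M(\alpha_0)$, while the blow-up condition \eqref{assumption near rhobar} ensures $H(\varrho) \to +\infty$; a small enough $\alpha_1$ then forces $H(\varrho) \geq 2M$ on the whole interval. The analogous blow-up of $h$ handles the corresponding $h$-inequality: the claim rearranges to $h(\varrho) \geq -h(r) - h'(r)(\varrho - r)$, whose right-hand side is uniformly bounded while $h(\varrho) \to +\infty$. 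The main obstacle is ensuring that a single $\alpha_1$ works simultaneously for every $r \in [\alpha_0, \bar{\varrho} - \alpha_0]$; this is handled by compactness of this interval and the resulting uniform control of $H, H', h, h'$.
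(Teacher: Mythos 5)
Your argument is essentially correct, and it is worth noting that the paper itself gives no proof of this lemma at all -- it simply refers to \cite{Fei-yong-nov-2018} -- so your write-up is a self-contained replacement rather than a variant of the paper's argument. The ingredients you use are the right ones: the integral Taylor formula with $H''(s)=h'(s)/s$ bounded below on the compact set $[\alpha_1,\bar{\varrho}-\alpha_1]$ in the middle regime; the identity $rH'(r)-H(r)=h(r)$ together with $H(0)=0$ and the monotonicity of $\varrho\mapsto H'(\varrho)-H'(r)$ in the low-density regime; and in the high-density regime the observation that $H(r)+H'(r)(\varrho-r)$ is bounded uniformly for $r\in[\alpha_0,\bar{\varrho}-\alpha_0]$ while \eqref{assumption near rhobar} forces $h(z)\geq c(\bar{\varrho}-z)^{-\beta}$ with $\beta>1$ near $\bar{\varrho}$, hence $H(\varrho)\to+\infty$. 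The compactness of $[\alpha_0,\bar{\varrho}-\alpha_0]$ gives the uniformity in $r$, and since $\alpha_1$ is fixed by the low- and high-density constraints before $c$ is chosen on $[\alpha_1,\bar{\varrho}-\alpha_1]$, there is no circularity. Two remarks. First, the only delicate point is the one you flag yourself: the quadratic upper bound $h(\varrho)-h(r)-h'(r)(\varrho-r)\leq c(\varrho-r)^2$ in the middle regime does not follow from the stated hypothesis $h\in C^1[0,\bar{\varrho})$ (a $C^1$ function only yields $o(|\varrho-r|)$); it requires $h'$ locally Lipschitz, e.g.\ $h\in C^2(0,\bar{\varrho})$. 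This extra regularity is in fact used implicitly later in the paper (the estimate of $\mathcal{J}_5$ invokes $\max h''$), so your caveat is consistent with the text, but it should be recorded as a hypothesis rather than left implicit. Second, in the high-density regime the companion $h$-inequality is even simpler than you make it: there $\varrho\geq\bar{\varrho}-\alpha_1>r$, so $-h(r)-h'(r)(\varrho-r)\leq 0\leq h(\varrho)$ and no boundedness argument is needed.
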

	\begin{Rem}
		Without loss of generality we can assume on $[\bar{\varrho} -\alpha_1, \bar{\varrho})$, $H(\varrho)>2$. 
	\end{Rem}
\begin{Rem}
	Further we consider $\alpha_1$ such that $supp(q)\subset (\alpha_1,\bar{\varrho}-\alpha_1)$. 
\end{Rem}
\begin{proof}[Proof of Lemma \eqref{Lemma pressure bound}]
	The proof has been discussed in \cite{Fei-yong-nov-2018}.
\end{proof}

\subsection{Discussion of Lemma \eqref{Lemma pressure bound}}
	In Lemma \eqref{Lemma pressure bound} the constant $c$ depends on $r$ such that $c(r)$ is unformly bounded on $(\alpha_0, \bar{\varrho}-\alpha_0)$.
	From the above two results and hypotheses of \ref{Lemma pressure bound} and \ref{pr-defn} we can say that for $0\leq \varrho \leq \bar{\varrho}-\alpha_1$ we have  
	\begin{align*}
	\vert  h(\varrho)-h(r)-h'(r)(\varrho -r) \vert \leq C(H(\varrho)-H(r)-H'(r)(\varrho -r)).
	\end{align*}
    As $\bar{\varrho} - \alpha_1 \leq \varrho <\bar{\varrho}$ we have no control on $  h(\varrho)-h(r)-h'(r)(\varrho -r)$ by $ H(\varrho)-H(r)-H'(r)(\varrho -r)$, so we need to add one extra term $\int_{0}^{\tau} \int_{ \Om} b(\varrho) h(\varrho) \dx \dt $  on the left hand side of the equation which takes care of that case where $b$ is a function which satisfies the hypothesis of renormalized equation.\\
Let the symbol $\Delta_x$ denote the Laplace operator defined on spatially periodic functions with zero mean.
\subsection{Relative energy inequality with extra term}
Motivated from discussion above we rewrite the expression for relative entropy as 
	\begin{align}\label{relative entropy modified}
	\begin{split}
	[\mathcal{E}(t)]_{t=0}^{t=\tau} +& \int_0^\tau \int_{ \Om} \mathbb{S}(\nabla_x \vectoru- \nabla_x \vectorU) : (\nabla_x \vectoru -\nabla_x \vectorU) \;\dx \dt +  \int_0^{\tau} \int_{\Om} b(\varrho)h(\varrho)\\
	&\leq \int_0^\tau \mathcal{R}_1(t)\dt + \int_0^\tau \mathcal{R}_2 (t)\dt + \mathcal{R}_{3}(\tau),
	\end{split}
	\end{align}
	where, $\mathcal{R}_1 (\cdot) $ is given by
	\begin{align}
	\begin{split}
	\mathcal{R}_1(t)=& \int_{ \Om}  (\frac{\varrho}{r}-1) (\vectorU- \vectoru)\; ( \Div \mathbb{S}(\nabla_x \vectorU)-\nabla_x q(r))\; \dx\\
	&  + \int_{ \Om} \varrho (\vectoru-\vectorU)\cdot ((\vectorU -\vectoru)\cdot \nabla_x)\mathbf{U} \;\dx \\
	&+\int_{ \Om}(- h(\varrho) +h(r)+ h'(r)(\varrho -r))\; \Div \vectorU\;   \dx \\
	&+  \int_{\Om} (\Div \vectoru -\Div \vectorU)(q(\varrho)-q(r))\; \dx \\
	&= \Sigma_{i=1}^{4} \mathcal{I}_i,
	\end{split}
	\end{align}
	$\mathcal{R}_2(\cdot)$ is given by
	\begin{align}
	\begin{split}
	\mathcal{R}_2(t)=& \int_{\Om} h(\varrho) \langle b(\varrho)\rangle \dx - \int_{\Om} (q(\varrho)- q(r)) b(\varrho)\dx 
	\\ &+ \int_{\Om} q(r) b(\varrho) \dx - \int_{\Om} q(r) \langle b(\varrho) \rangle \dx + \int_{\Om} q(\varrho) \langle b(\varrho)
	\rangle \dx\\
	&- \int_{\Om} \varrho \vectoru \otimes \vectoru :\nabla_x( \invdiv (b(\varrho)-\langle b(\varrho) \rangle)) \dx\\
	& +\int_{\Om} \mathbb{S}(\nabla_x \vectoru ) : \nabla_x( \invdiv (b(\varrho)-\langle b(\varrho) \rangle)) \dx\\
	& +\int_{\Om} \varrho \vectoru \cdot \invdiv  \Div (b(\varrho)\vectoru ) \dx \\
	& +\int_{\Om} \varrho \vectoru \cdot \invdiv \big((b'(\varrho)\varrho -b(\varrho)) \Div\vectoru -\langle (b'(\varrho)\varrho -b(\varrho) \Div \vectoru\rangle\big) \dx\\
	&= \Sigma_{i=5}^{13} \mathcal{I}_i,
	\end{split}
	\end{align}
	and $\mathcal{R}_3(\cdot)$ is given by
	\begin{align}\label{r3}
	\begin{split}
	\mathcal{R}_{3} (\tau) =& \int_{\Om} \varrho \vectoru \cdot \invdiv(b(\varrho)-\langle b(\varrho)\rangle(\tau,\cdot) \dx\\
	&- \int_{\Om} \varrho_0 \vectoru_{0} \cdot \invdiv (b(\varrho_0)-\langle b(\varrho_0)\rangle \dx\\
	&= \Sigma_{i=14}^{15} \mathcal{I}_i.
	\end{split}
	\end{align}

	We sum up the above results and state the theorem,
	\begin{Th}
		Suppose the pressure constraint \eqref{assumption near rhobar} is satisfied. Let $\{ \varrho,\vectoru \}$ be a finite energy weak solution in $(0,T)\times \Omega$ in the sense of definition \eqref{def1}. Let $(r,\vectorU)\in C^{1}([0,T]\times \Om)\times C^{1}(0,T; C^{2}(\Om))$ such that 
		\begin{align*}
		0<r<\bar{\varrho}.
		\end{align*} 
		Let $b(s)\in C^1[0,\varrho)$ satisfy the condition,
		\begin{align}\label{modified assumption on b,b'}
		\vert b'(s) \vert^{\frac{5}{2}} +\vert b(s) \vert^{\frac{5}{2}} \leq C(1+h(s)) \text{ for some constant }C \text{ and any } s\in[0,\bar{\varrho}).
		\end{align}\label{modified reative energy 2}
		Then the following relative energy  true for a.e. $\tau\in (0,T)$,
		\begin{align}
		\begin{split}
		[\mathcal{E}(t)]_{t=0}^{t=\tau} +& \int_0^\tau \int_{ \Om} \mathbb{S}(\nabla_x \vectoru- \nabla_x \vectorU) : (\nabla_x \vectoru -\nabla_x \vectorU) \;\dx \dt +  \int_0^{\tau} \int_{\Om} b(\varrho)h(\varrho)\\
		&\leq \int_0^\tau \mathcal{R}_1(t)\dt + \int_0^\tau \mathcal{R}_2 (t)\dt + \mathcal{R}_{3}(\tau),
		\end{split}
		\end{align}
		with $\mathcal{R}_1$, $\mathcal{R}_2$ and $\mathcal{R}_3$ defined as above.
	\end{Th}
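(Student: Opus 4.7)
The plan is to assemble the inequality from two pieces. The first piece is the ``classical'' relative energy inequality
\[
[\mathcal{E}(t)]_{t=0}^{t=\tau} + \int_0^\tau\!\int_{\Om} \mathbb{S}(\nabla_x\vectoru-\nabla_x\vectorU):(\nabla_x\vectoru-\nabla_x\vectorU)\dx\dt \leq \int_0^\tau \mathcal{R}_1(t)\dt,
\]
which is the inequality already displayed just before Lemma~\ref{Lemma pressure bound}. It is derived by following Section~3 verbatim: decompose $\mathcal{E}(\tau)$ into the five integrals $K_1,\dots,K_5$, use the energy inequality from Definition~\ref{def1} for $K_1$, test the weak momentum equation \eqref{momentum_eqn_weak} against $\vectorU$ for $K_2$, and use the renormalized continuity equation \eqref{renm_cont_eqn_weak} tested against $\tfrac12|\vectorU|^2$ and against $H'(r)$ for $K_3,K_4$, before finally substituting the classical system satisfied by $(r,\vectorU)$. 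The only change with respect to Section~3 is the periodic setting, which actually simplifies matters by eliminating spatial boundary contributions.

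The second piece, which produces the extra dissipation $\int b(\varrho)h(\varrho)$ on the left together with $\mathcal{R}_2$ and $\mathcal{R}_3$ on the right, comes from testing the momentum equation against the Bogovskii-type function
\[
\vectorphi(t,x):=\invdiv\bigl(b(\varrho)-\langle b(\varrho)\rangle\bigr),
\]
which is well defined on the torus since its argument has zero spatial mean. Because $\Div\vectorphi = b(\varrho)-\langle b(\varrho)\rangle$, the pressure contribution $\int p(\varrho)\Div\vectorphi$ in \eqref{momentum_eqn_weak} becomes $\int h(\varrho)(b-\langle b\rangle) + \int q(\varrho)(b-\langle b\rangle)$, and an algebraic rearrangement that splits $q(\varrho)=(q(\varrho)-q(r))+q(r)$ (to isolate the combinations amenable to Gr\"onwall-type estimates) produces exactly the five terms $\mathcal{I}_5,\dots,\mathcal{I}_9$, plus the target $\int b(\varrho)h(\varrho)$ which is moved to the left-hand side. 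The convective and viscous terms $-\int\varrho\vectoru\otimes\vectoru:\nabla_x\vectorphi$ and $+\int\mathbb{S}(\nabla_x\vectoru):\nabla_x\vectorphi$ give $\mathcal{I}_{10}$ and $\mathcal{I}_{11}$; the time derivative $-\int\varrho\vectoru\cdot\partial_t\vectorphi$, expanded via the renormalized continuity equation \eqref{renm_cont_eqn_weak} applied to $b(\varrho)$, gives $\mathcal{I}_{12}$ and $\mathcal{I}_{13}$; and the temporal boundary contribution $[\int\varrho\vectoru\cdot\vectorphi]_0^\tau$ gives $\mathcal{R}_3=\mathcal{I}_{14}+\mathcal{I}_{15}$. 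Adding the resulting identity to the classical relative energy inequality of the first paragraph yields the claim.

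The main obstacle will be justifying that $\vectorphi$ is admissible in the weak formulation of \eqref{momentum_eqn_weak}: it is not smooth in time, and $b(\varrho),b'(\varrho)$ only have the integrability encoded in the structural bound $|b|^{5/2}+|b'|^{5/2}\le C(1+h(\varrho))$. I plan to handle this by approximation --- regularize $\vectorphi$ by time mollification and truncate $b$ to a smooth bounded $b_\delta$ satisfying the same structural bound uniformly in $\delta$, derive the identity for the regularized function where it is unambiguously admissible, and pass to the limit. The exponent $\tfrac{5}{2}$ in the hypothesis on $b$ is chosen precisely so that the most singular bilinear terms close: combined with $\vectoru\in L^2(0,T;W^{1,2})$ and $\varrho|\vectoru|^2\in L^\infty(0,T;L^1)$, it ensures that the convective pairing $\varrho\vectoru\otimes\vectoru:\nabla_x\vectorphi$ (where $\nabla_x\vectorphi$ is a Riesz transform of $b(\varrho)$) and the commutator term $\varrho\vectoru\cdot\invdiv\Div(b(\varrho)\vectoru)$ are integrable and stable under the limiting procedure. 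The growth assumption $\beta>\tfrac{5}{2}$ on $h$ near $\bar\varrho$ is exactly what makes $b(\varrho)\in L^\infty(0,T;L^{5/2}(\Om))$ compatible with the a-priori bound on $H(\varrho)$, closing the scheme.
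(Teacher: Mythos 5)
Your proposal is correct and follows essentially the same route as the paper, whose proof simply defers to \cite{Fei-yong-nov-2018}: the inequality is obtained by adding to the standard relative energy inequality the identity coming from testing the momentum equation with $\invdiv\big(b(\varrho)-\langle b(\varrho)\rangle\big)$, using the renormalized continuity equation for the time derivative, and checking that all resulting integrals are finite thanks to the growth condition $|b|^{5/2}+|b'|^{5/2}\leq C(1+h)$. Your identification of the role of the exponent $\tfrac{5}{2}$ and of the approximation/admissibility issue is exactly the verification the paper delegates to that reference.
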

\begin{Rem}
	Condition \eqref{modified assumption on b,b'} is slightly changed from the similar assumption in definition \eqref{def1}. Although $b$ following \eqref{modified assumption on b,b'} will be a suitable candidate for $b$ as prescribed in definition \eqref{def1}.
\end{Rem}
\begin{proof}
 We have to check that all integrals in R.H.S of \eqref{modified reative energy 2} are bounded which is already done in \cite{Fei-yong-nov-2018}.
 
\end{proof}
	\section{Weak-strong uniqueness}
	We have achieved the derivation of remainder terms. 
	Now we consider a fixed $b$ which satisfies \eqref{modified assumption on b,b'}. Then we want to show that $\mathcal R_{i}(\cdot)$ can be bounded by $\eta(\cdot) \mathcal{E}(\cdot)$ for some positive function $\eta$, for each $i=1,2,3$.

\subsection{Choice of $b$ and its properties}
Consider $b\in C^{\infty}[0,\bar{\varrho})$, $b'(s)\geq 0$ as follows:
 \begin{align}\label{b}
 \begin{split}
 b(s)=\begin{cases}
 0 &\text{ if } s\leq \bar{\varrho} -\alpha_1,\\
 -\log(\bar{\varrho} -s ),& \text{ if } \bar{\varrho}-\alpha_2,
 \end{cases}
 \quad b'(s) >0 \text{ if } \bar{\varrho}-\alpha_1 < s < \bar{\varrho}- \alpha_2.
 \end{split}
 \end{align}
The choice of $\alpha_2$ is in such a way that 
\begin{equation}\label{alphatwo choice}
-\log(\bar{\varrho}- s) \geq 16 \Vert \Div \vectorU \Vert , \text{ if } \bar{\varrho}- \alpha_2 \leq s < \bar{\varrho}.
\end{equation}
Considering the assumption \eqref{assumption near rhobar} along with \eqref{pr potential defn} we have the following results:
\begin{equation}
\text{For } \gamma >0, \lim_{s\rightarrow \bar{\varrho}-} \frac{h(s)}{(b(s))^{\gamma}} = \lim_{s\rightarrow \bar{\varrho}-} \frac{H(s)}{(b(s))^{\gamma}} =\lim_{s\rightarrow \bar{\varrho}-} \frac{h(s)}{(b'(s))^{\beta}} = \lim_{s\rightarrow \bar{\varrho}-} \frac{H(s)}{(b(s))^{\beta-1}}= +\infty.
\end{equation}
This along with \eqref{Lemma pressure bound} yields the following, for $\gamma \geq 1$:
\begin{align}\label{b by pr potential}
\begin{split}
\int_{ \Om} \vert b(\varrho) \vert^{\gamma} \dx =& \int_{ \varrho \geq \bar{\varrho} -\alpha_1} \vert b(\varrho) \vert^{\gamma} \dx \\& \leq C \int_{ \varrho \geq \bar{\varrho} -\alpha_1} H(\varrho) \dx \\ &
\leq C \int_{ \varrho \geq \bar{\varrho} -\alpha_1} (H(\varrho)- H(r) -H'(r)(\varrho -r)) \dx.
\end{split}
\end{align}
Also for any $2\leq \beta_0 \leq \beta$, we have,
\begin{align}\label{b' by pr potential}
\begin{split}
\int_{ \Om} \vert b'(\varrho) \vert ^{\beta_0 -1} \dx &\leq C \int_{ \varrho \geq \bar{\varrho} -\alpha_1} H(\varrho) \dx \leq  C \int_{ \varrho \geq \bar{\varrho} -\alpha_1} (H(\varrho)- H(r) -H'(r)(\varrho -r)) \dx,\\
{\int_{ \Om} \vert b'(\varrho) \vert ^{\beta_0 } \dx }&{\leq C \int_{ \Om}h(\varrho)\dx }.
\end{split}
\end{align}  
\subsection{Estimates for remainder }
Now we proceed to estimate the remainder terms, As earlier mentioned, in \cite{Fei-yong-nov-2018} Feireisl, Lu, Novotn\' y have encountered similar problem with $q\equiv 0$. In a similar way we can compute terms other than $\mathcal{I}_1, \mathcal{I}_4$ and $ \mathcal{I}_6$ to $\mathcal{I}_9$.  First, 
 \begin{align}
\mathcal{I}_1 = \int_{ \Om}  (\frac{\varrho}{r}-1) (\vectorU- \vectoru)\; ( \Div \mathbb{S}(\nabla_x \vectorU)-\nabla_x q(r))\; \dx = \Sigma_{i=1}^{3} \mathcal{J}_i,
 \end{align}
with
\begin{align}
\begin{split}
&\mathcal{J}_1 := \int_{ \alpha_1\leq \varrho \leq \bar{\varrho} -\alpha_1}\int_{ \Om}  (\frac{\varrho}{r}-1) (\vectorU- \vectoru)\; ( \Div \mathbb{S}(\nabla_x \vectorU)-\nabla_x q(r))\; \dx\\
&\mathcal{J}_2:= \int_{ \varrho \leq \alpha_1} \int_{ \Om}  (\frac{\varrho}{r}-1) (\vectorU- \vectoru)\; ( \Div \mathbb{S}(\nabla_x \vectorU)-\nabla_x q(r))\; \dx\\
&\mathcal{J}_3 := \int_{ \varrho \geq \bar{\varrho} -\alpha_1}\int_{ \Om}  (\frac{\varrho}{r}-1) (\vectorU- \vectoru)\; ( \Div \mathbb{S}(\nabla_x \vectorU)-\nabla_x q(r))\; \dx.
\end{split}
\end{align}

Clearly, by Taylor's formula, Cauchy-Schwarz inequality and Poincar{\'e} inequality, we have for any $\sigma>0$,
\begin{align}
\begin{split}
\vert \mathcal{J}_1 \vert \leq \frac{C}{\sigma} \Vert & (\Div \mathbb{S}(\nabla_x \vectorU)(t)- \nabla_x q(r)) \Vert_{L^\infty} \int_{ \alpha_1\leq \varrho \leq \bar{\varrho} -\alpha_1} (\varrho - r)^2 \dx \\
& + \sigma \int_{ \Om} \vert \nabla_x (\vectorU- \vectoru) \vert^2 \dx.
\end{split}
\end{align}
Further using Korn's inequality and \eqref{Lemma pressure bound} we deduce that, 
\begin{align}
\begin{split}
\vert \mathcal{J}_1 \vert \leq \frac{C}{\sigma} \Vert & (\Div \mathbb{S}(\nabla_x \vectorU)(t)- \nabla_x q(r)) \Vert_{L^\infty} \int_{ \alpha_1\leq \varrho \leq \bar{\varrho} -\alpha_1} H(\varrho)-H(r)-H'(r)(\varrho -r) \dx \\
& + \sigma \int_{ \Om}  \mathbb{S}(\nabla_x (\vectorU- \vectoru) ):\nabla_x (\vectorU- \vectoru) \dx \\
&\leq \frac{1}{\sigma} \eta(t)\mathcal{E}(t) +  \sigma \int_{ \Om}  \mathbb{S}(\nabla_x (\vectorU- \vectoru) ):\nabla_x (\vectorU- \vectoru) \dx. \\
\end{split}
\end{align}
Similarly, 
\begin{align}
\begin{split}
\vert \mathcal{J}_2 \vert \leq \frac{C}{\sigma} \Vert & (\Div \mathbb{S}(\nabla_x \vectorU)(t)- \nabla_x q(r)) \Vert_{L^\infty} \int_{ \varrho \leq \alpha_1} 1 \dx \\
& + \sigma \int_{ \Om}  \mathbb{S}(\nabla_x (\vectorU- \vectoru) ):\nabla_x (\vectorU- \vectoru) \dx \\
\leq \frac{C}{\sigma} \Vert & (\Div \mathbb{S}(\nabla_x \vectorU)(t)- \nabla_x q(r)) \Vert_{L^\infty} \int_{ \varrho \leq \alpha_1} h(r) \dx \\
& + \sigma \int_{ \Om}  \mathbb{S}(\nabla_x (\vectorU- \vectoru) ):\nabla_x (\vectorU- \vectoru) \dx \\
&\leq \frac{1}{\sigma} \eta(t)\mathcal{E}(t) +  \sigma \int_{ \Om}  \mathbb{S}(\nabla_x (\vectorU- \vectoru) ):\nabla_x (\vectorU- \vectoru) \dx ,\\
\end{split}
\end{align}
and
\begin{align}
\begin{split}
\vert \mathcal{J}_3 \vert \leq \frac{C}{\sigma} \Vert & (\Div \mathbb{S}(\nabla_x \vectorU)(t)- \nabla_x q(r)) \Vert_{L^\infty} \int_{ \varrho \geq \bar{\varrho} -\alpha_1} 1 \dx \\
& + \sigma \int_{ \Om}  \mathbb{S}(\nabla_x (\vectorU- \vectoru) ):\nabla_x (\vectorU- \vectoru) \dx \\
\leq \frac{C}{\sigma} \Vert & (\Div \mathbb{S}(\nabla_x \vectorU)(t)- \nabla_x q(r)) \Vert_{L^\infty} \int_{ \varrho \geq \bar{\varrho} -\alpha_1} H(\varrho) \dx \\
& + \sigma \int_{ \Om}  \mathbb{S}(\nabla_x (\vectorU- \vectoru) ):\nabla_x (\vectorU- \vectoru) \dx \\
&\leq \frac{1}{\sigma} \eta(t)\mathcal{E}(t) +  \sigma \int_{ \Om}  \mathbb{S}(\nabla_x (\vectorU- \vectoru) ):\nabla_x (\vectorU- \vectoru) \dx .
\end{split}
\end{align}
{Combining above estimates, we have}
\begin{align}
\begin{split}\label{I_1}
\vert \mathcal{I}_{1} \vert \leq  \frac{3}{\sigma} \eta(t)\mathcal{E}(t) +  3\sigma \int_{ \Om}  \mathbb{S}(\nabla_x (\vectorU- \vectoru) ):\nabla_x (\vectorU- \vectoru) \dx .
\end{split}
\end{align}

Next for $\mathcal{I}_2$ we have,
\begin{align}\label{I_2}
\int_{ \Om} \varrho (\vectoru-\vectorU)\cdot ((\vectorU -\vectoru)\cdot \nabla_x)\mathbf{U} \;\dx \leq \Vert \nabla_x \vectorU(t) \Vert_{L^\infty} \int_{ \Om} \varrho \vert \vectoru - \vectorU \vert ^2 \dx \leq \eta(t) \mathcal{E}(t).
\end{align}
Looking at $\mathcal{I}_3=\Sigma_{i=4}^{5}\mathcal{J}_i$ we obtain
\begin{align}
\begin{split}
&\mathcal{J}_4 = \int_{ \varrho \leq \bar{\varrho} -\alpha_1}(- h(\varrho) +h(r)+ h'(r)(\varrho -r))\; \Div \vectorU\;   \dx ,\\
&\mathcal{J}_5 = \int_{ \varrho \geq \bar{\varrho} -\alpha_1}(- h(\varrho) +h(r)+ h'(r)(\varrho -r))\; \Div \vectorU\;   \dx .
\end{split}
\end{align} By remark of \eqref{Lemma pressure bound} we obtain 
\begin{equation}
\vert \mathcal{J}_4 \vert \leq \Vert \nabla_x \vectoru \Vert_{L^\infty} \int_{ \varrho \leq \bar{\varrho} -\alpha_1}  H(\varrho)-H(r)-H'(r)(\varrho -r) \dx \leq \eta(t) \mathcal{E}(t).
\end{equation}
For $\mathcal{J}_5$ we have to estimate carefully. We have $\alpha_2$ from \eqref{alphatwo choice}, then by \eqref{Lemma pressure bound} and \eqref{b by pr potential} give us,

\begin{align}
\begin{split}
\vert \mathcal{J}_5 \vert & \leq \int_{ \bar{\varrho}-\alpha_1\leq \varrho \leq \bar{\varrho} -\alpha_2}(- h(\varrho) +h(r)+ h'(r)(\varrho -r))\; \Div \vectorU\;   \dx \\
& \quad + \int_{ \varrho \geq \bar{\varrho} -\alpha_2} (- h(\varrho) +h(r)+ h'(r)(\varrho -r))\; \Div \vectorU\;   \dx \\
&\leq \int_{ \bar{\varrho}-\alpha_1\leq \varrho \leq \bar{\varrho} -\alpha_2} \vert \Div \vectorU \vert \max_{\bar{\varrho}-\alpha_1\leq s \leq \bar{\varrho} -\alpha_2} h''(s) (\varrho- r)^2 \dx + \frac{1}{8} \int_{ \Om} h(\varrho) b(\varrho) \dx\\
&\leq \eta(t) \mathcal{E}(t) + \frac{1}{8} \int_{ \Om} h(\varrho) b(\varrho) \dx\\
\end{split}
\end{align}  
\begin{align}\label{I_3}
\begin{split}
\vert \mathcal{I}_{3} \vert \leq \frac{1}{8}\int_{ \Om} b(\varrho)h(\varrho)\dx + \eta(t)\mathcal{E}(t) .
\end{split}
\end{align}

For $\mathcal{I}_4=\Sigma_{i=6}^{8}\mathcal{J}_i$ we write,
\begin{align}
\begin{split}
&\mathcal{J}_6=\int_{ \alpha_1\leq \varrho \leq \bar{\varrho} -\alpha_1} (\Div \vectoru -\Div \vectorU)(q(\varrho)-q(r))\; \dx, \\
&\mathcal{J}_7=\int_{ \varrho \leq \alpha_1} (\Div \vectoru -\Div \vectorU)(q(\varrho)-q(r))\; \dx ,\\
&\mathcal{J}_8=\int_{ \varrho \geq \bar{\varrho} -\alpha_1}(\Div \vectoru -\Div \vectorU)(q(\varrho)-q(r))\; \dx .\\
\end{split}
\end{align}
Now using  Cauchy-Schwarz inequality, Korn inequality and lemma \eqref{Lemma pressure bound} we have,
\begin{align}
\begin{split}
\vert \mathcal{J}_6 \vert &\leq  \frac{C}{\sigma} M \int_{ \alpha_1\leq \varrho \leq \bar{\varrho} -\alpha_1} (\varrho - r)^2 \dx + \sigma \int_{ \Om}  \mathbb{S}(\nabla_x (\vectorU- \vectoru) ):\nabla_x (\vectorU- \vectoru) \dx \\
&\leq \frac{C}{\sigma}\int_{ \alpha_1\leq \varrho \leq \bar{\varrho} -\alpha_1} H(\varrho)-H(r)-H'(r)(\varrho -r) \dx \\
&\quad  + \sigma \int_{ \Om}  \mathbb{S}(\nabla_x (\vectorU- \vectoru) ):\nabla_x (\vectorU- \vectoru) \dx \\
&\leq  \frac{1}{\sigma} \eta(t)\mathcal{E}(t) +  \sigma \int_{ \Om}  \mathbb{S}(\nabla_x (\vectorU- \vectoru) ):\nabla_x (\vectorU- \vectoru) \dx .
\end{split}
\end{align}
Since $q$ is a compactly supported function, using a similar argument we have,
\begin{align}
\begin{split}
\vert \mathcal{J}_7 \vert + \vert \mathcal{J}_8 \vert  &\leq \frac{C}{\sigma} \int_{ \varrho \leq \alpha_1} 1 \dx + \frac{C}{\sigma} \int_{ \varrho \geq \bar{\varrho} -\alpha_1} 1 \dx \\
&\quad + 2\sigma \int_{ \Om}  \mathbb{S}(\nabla_x (\vectorU- \vectoru) ):\nabla_x (\vectorU- \vectoru) \dx \\
&\leq  \frac{1}{\sigma} \eta(t)\mathcal{E}(t) +  2\sigma \int_{ \Om}  \mathbb{S}(\nabla_x (\vectorU- \vectoru) ):\nabla_x (\vectorU- \vectoru) \dx .
\end{split}
\end{align}
From the above estimates we have,
\begin{align}
\begin{split}\label{I_4}
\vert \mathcal{I}_{4} \vert \leq  \frac{1}{\sigma} \eta(t)\mathcal{E}(t) +  3\sigma \int_{ \Om}  \mathbb{S}(\nabla_x (\vectorU- \vectoru) ):\nabla_x (\vectorU- \vectoru) \dx .
\end{split}
\end{align}

Next we will focus on the terms of $\mathcal{R}_2$. 
As a consequence of \eqref{b by pr potential} we have 
\begin{align}
\langle b(\varrho) \rangle=\dfrac{1}{\vert \Om \vert} \int_{ \Om} b(\varrho) \dx\leq C \mathcal{E}(t)
\end{align}
Using above relation in $\mathcal{I}_5$, we obtain,
\begin{equation}\label{I_5}
\vert \mathcal{I}_5 \vert \leq C \mathcal{E}(t) \int_{ \Om}h(\varrho) \dx \leq \eta(t) \mathcal{E}(t).
\end{equation}
Similarly using that $q$ has compact support, we have 
\begin{align}\label{I_7,8,9}
\vert \mathcal{I}_7 \vert +\vert \mathcal{I}_8 \vert+\vert \mathcal{I}_9 \vert \leq \eta(t) \mathcal{E}(t).
\end{align}
For $\mathcal{I}_6$ we rewrite it as
\begin{align}
\vert \mathcal{I}_6 \vert \leq C \int_{ \Om} (q(\varrho)-q(r))^2 \dx + C \int_{ \Om} (b(\varrho))^2 \dx \leq \Sigma_{i=9}^{11} \mathcal{J}_i + \eta(t)\mathcal{E}(t),
\end{align}
where,
\begin{align}
\begin{split}
&\mathcal{J}_9=\int_{ \alpha_1\leq \varrho \leq \bar{\varrho} -\alpha_1} (q(\varrho)-q(r))^2\; \dx, \\
&\mathcal{J}_{10}=\int_{ \varrho \leq \alpha_1} (q(\varrho)-q(r))^2\; \dx ,\\
&\mathcal{J}_{11}=\int_{ \varrho \geq \bar{\varrho} -\alpha_1}(q(\varrho)-q(r))^2\; \dx .\\
\end{split}
\end{align}
It is similar to  $\mathcal{J}_6$, $\mathcal{J}_7$ and $\mathcal{J}_8$. Thus using a similar aregument we have,
\begin{equation}\label{I_6}
\vert \mathcal{I}_6 \vert \leq  \frac{1}{\sigma} \eta(t)\mathcal{E}(t) +  2\sigma \int_{ \Om}  \mathbb{S}(\nabla_x (\vectorU- \vectoru) ):\nabla_x (\vectorU- \vectoru) \dx .
\end{equation}

The estimates below directly follows from \cite{Fei-yong-nov-2018} with minor modification. The condition $\beta\geq 3$ plays a crucial role here.
We have, 
\begin{align}\label{I_10,11,12}
\begin{split}
\vert \mathcal{I}_{10} \vert + \vert \mathcal{I}_{11} \vert + \vert \mathcal{I}_{12} \vert \leq  \frac{1}{\sigma} \eta(t)\mathcal{E}(t) +  3\sigma \int_{ \Om}  \mathbb{S}(\nabla_x (\vectorU- \vectoru) ):\nabla_x (\vectorU- \vectoru) \dx .
\end{split}
\end{align}

Also
\begin{align}\label{I_13}
\begin{split}
\vert \mathcal{I}_{13} \vert \leq \frac{1}{8}\int_{ \Om} b(\varrho)h(\varrho)\dx + \frac{1}{\sigma} \eta(t)\mathcal{E}(t) +  2\sigma \int_{ \Om}  \mathbb{S}(\nabla_x (\vectorU- \vectoru) ):\nabla_x (\vectorU- \vectoru) \dx .
\end{split}
\end{align}

Now for initial data $\varrho_{0}=r_0 \in [\alpha_0,\bar{\varrho} -\alpha_0]$, we have $b(\varrho_{0})\equiv0$. Hence, $\mathcal{I}_{15}=0$. We can write,
\begin{align}\label{I_14,15}
\begin{split}
\vert \mathcal{I}_{14} \vert+\vert \mathcal{I}_{15} \vert \leq \frac{1}{4}\int_{ \Om} \varrho \vert \vectoru-\vectorU \vert^2 (\tau,\cdot) \dx + \frac{1}{2} \int_{ \varrho \geq \bar{\varrho} -\alpha_1} (  H(\varrho)-H(r)-H'(r)(\varrho -r) )\dx.
\end{split}
\end{align}

Combining \eqref{I_1},\eqref{I_2},\eqref{I_3}, \eqref{I_4},\eqref{I_5},\eqref{I_7,8,9} and \eqref{I_6}-\eqref{I_14,15} and choosing $\sigma$ small we can conclude that

\begin{align}\label{re_final2}
\begin{split}
[\mathcal{E}(t) ]_{t=0}^{t=\tau}+& \int_0^\tau \int_{ \Om} \mathbb{S}(\nabla_x \vectoru- \nabla_x \vectorU) : (\nabla_x \vectoru -\nabla_x \vectorU) \;\dx \dt \\ & +  \int_0^{\tau} \int_{\Om} b(\varrho)h(\varrho) \dx \dt\\
&\leq \int_{0}^{\tau}  \eta(t) \Epsilon(t) \dt,
\end{split}
\end{align}
where $\eta \in L^1(0,T)$.
\subsection{End of the proof}
\begin{proof}[Proof of Theorem \eqref{maintheo}:]
	Since $b\geq 0$, as a consequence of Gr{\"o}nwall's lemma and hypothesis for same initial data in \eqref{maintheo} we have $\mathcal{E}\equiv 0$ in $[0,T]$, which ensures our desired weak strong uniqueness result.
\end{proof}
\section{Concluding remarks}
This method cannot be extended to the Euler (inviscid) system as the viscous damping plays a crucial role in the proof.


\centerline{ \bf Acknowledgement} 

The work was supported by Einstein Stiftung, Berlin.

\begin{bibdiv}
	\begin{biblist}
		
		\bib{Ant-kazhi-monakov}{book}{
			author={Antontsev, S.~N.},
			author={Kazhikhov, A.~V.},
			author={Monakhov, V.~N.},
			title={{ Kraevye zadachi mekhaniki neodnorodnykh zhidkoste\u{\i}}},
			publisher={``Nauka'' Sibirsk. Otdel., Novosibirsk},
			date={1983},
			review={\MR{721637}},
		}
		
		\bib{BeVa1}{article}{
			author={Berthelin, F.},
			author={Vasseur, A.},
			title={From kinetic equations to multidimensional isentropic gas
				dynamics before shocks},
			date={2005},
			journal={SIAM J. Math. Anal.},
			volume={{\bf 36}},
			pages={1807\ndash 1835},
		}
		
		\bib{bresch-jabin}{article}{
			author={{Bresch}, D.},
			author={{Jabin}, P.-E.},
			title={Global existence of weak solutions for compresssible
				navier--stokes equations: Thermodynamically unstable pressure and anisotropic
				viscous stress tensor},
			date={2015-07},
			journal={ArXiv e-prints},
			eprint={1507.04629},
		}
		
		\bib{Daf4}{article}{
			author={Dafermos, C.M.},
			title={The second law of thermodynamics and stability},
			date={1979},
			journal={Arch. Rational Mech. Anal.},
			volume={{\bf 70}},
			pages={167\ndash 179},
		}
		
		\bib{diperna-mv}{article}{
			author={DiPerna, R.-J.},
			title={Measure-valued solutions to conservation laws},
			date={1985},
			ISSN={0003-9527},
			journal={Arch. Rational Mech. Anal.},
			volume={88},
			number={3},
			pages={223\ndash 270},
			url={https://doi.org/10.1007/BF00752112},
			review={\MR{775191}},
		}
		
		\bib{Fei-2002-exis-jde}{article}{
			author={Feireisl, E.},
			title={Compressible {N}avier-{S}tokes equations with a non-monotone
				pressure law},
			date={2002},
			ISSN={0022-0396},
			journal={J. Differential Equations},
			volume={184},
			number={1},
			pages={97\ndash 108},
			url={https://doi.org/10.1006/jdeq.2001.4137},
			review={\MR{1929148}},
		}
		
		\bib{fei-DVCF}{book}{
			author={Feireisl, E.},
			title={Dynamics of viscous compressible fluids},
			series={Oxford Lecture Series in Mathematics and its Applications},
			publisher={Oxford University Press, Oxford},
			date={2004},
			volume={26},
			ISBN={0-19-852838-8},
			review={\MR{2040667}},
		}
		
		\bib{fei-2018-nm}{article}{
			author={{Feireisl}, E.},
			title={On weak-strong uniqueness for the compressible navier-stokes
				system with non-monotone pressure law},
			date={2018-06},
			journal={ArXiv e-prints},
			eprint={1806.08924},
		}
		
		\bib{Fei-piotr-agnies-weid-2016}{article}{
			author={Feireisl, E},
			author={Gwiazda, P},
			author={{\'S}wierczewska-Gwiazda, A},
			author={Wiedemann, E},
			title={Dissipative measure-valued solutions to the compressible
				{N}avier-{S}tokes system},
			date={2016},
			ISSN={0944-2669},
			journal={Calc. Var. Partial Differential Equations},
			volume={55},
			number={6},
			pages={Art. 141, 20},
			url={https://doi.org/10.1007/s00526-016-1089-1},
			review={\MR{3567640}},
		}
		
		\bib{fei-jin-nov}{article}{
			author={Feireisl, E.},
			author={Jin, Bum~Ja},
			author={Novotn{\'y}, A.},
			title={Relative entropies, suitable weak solutions, and weak-strong
				uniqueness for the compressible {N}avier-{S}tokes system},
			date={2012},
			ISSN={1422-6928},
			journal={J. Math. Fluid Mech.},
			volume={14},
			number={4},
			pages={717\ndash 730},
			url={https://doi.org/10.1007/s00021-011-0091-9},
			review={\MR{2992037}},
		}
		
		\bib{fei-lu-malek2016}{article}{
			author={Feireisl, E.},
			author={Lu, Y.},
			author={M{\'a}lek, J.},
			title={On {PDE} analysis of flows of quasi-incompressible fluids},
			date={2016},
			ISSN={0044-2267},
			journal={ZAMM Z. Angew. Math. Mech.},
			volume={96},
			number={4},
			pages={491\ndash 508},
			url={https://doi.org/10.1002/zamm.201400229},
			review={\MR{3489305}},
		}
		
		\bib{Fei-yong-nov-2018}{article}{
			author={Feireisl, E.},
			author={Lu, Y.},
			author={Novotn{\'y}, A.},
			title={Weak-strong uniqueness for the compressible navier-stokes
				equations with a hard-sphere pressure law},
			date={2018Sep},
			ISSN={1869-1862},
			journal={Science China Mathematics},
			url={https://doi.org/10.1007/s11425-017-9272-7},
		}
		
		\bib{fei-nov-sun-2011}{article}{
			author={Feireisl, E.},
			author={Novotn{\'y}, A.},
			author={Sun, Y.},
			title={Suitable weak solutions to the {N}avier-{S}tokes equations of
				compressible viscous fluids},
			date={2011},
			ISSN={0022-2518},
			journal={Indiana Univ. Math. J.},
			volume={60},
			number={2},
			pages={611\ndash 631},
			url={https://doi.org/10.1512/iumj.2011.60.4406},
			review={\MR{2963786}},
		}
		
		\bib{fei_lu_nov_hs}{article}{
			author={Feireisl, E.},
			author={Yong, L.},
			author={Novotn{\'y}, A.},
			title={Rotating compressible fluids under strong stratification},
			date={2014},
			ISSN={1468-1218},
			journal={Nonlinear Anal. Real World Appl.},
			volume={19},
			pages={11\ndash 18},
			url={https://doi.org/10.1016/j.nonrwa.2014.02.004},
			review={\MR{3206654}},
		}
		
		\bib{Fei-Zhang}{article}{
			author={Feireisl, E.},
			author={Zhang, P.},
			title={Quasi-neutral limit for a model of viscous plasma},
			date={2010},
			ISSN={0003-9527},
			journal={Arch. Ration. Mech. Anal.},
			volume={197},
			number={1},
			pages={271\ndash 295},
			url={https://doi.org/10.1007/s00205-010-0317-7},
			review={\MR{2646821}},
		}
		
		\bib{Germain}{article}{
			author={Germain, P.},
			title={Weak-strong uniqueness for the isentropic compressible
				{N}avier-{S}tokes system},
			date={2011},
			ISSN={1422-6928},
			journal={J. Math. Fluid Mech.},
			volume={13},
			number={1},
			pages={137\ndash 146},
			url={https://doi.org/10.1007/s00021-009-0006-1},
			review={\MR{2784900}},
		}
		
		\bib{plionsbook}{book}{
			author={Lions, P.-L.},
			title={Mathematical topics in fluid mechanics. {V}ol. 2},
			series={Oxford Lecture Series in Mathematics and its Applications},
			publisher={The Clarendon Press, Oxford University Press, New York},
			date={1998},
			volume={10},
			ISBN={0-19-851488-3},
			note={Compressible models, Oxford Science Publications},
			review={\MR{1637634}},
		}
		
		\bib{MeVa1}{article}{
			author={Mellet, A.},
			author={Vasseur, A.},
			title={Existence and uniqueness of global strong solutions for
				one-dimensional compressible {N}avier-{S}tokes equations},
			date={2007/08},
			ISSN={0036-1410},
			journal={SIAM J. Math. Anal.},
			volume={39},
			number={4},
			pages={1344\ndash 1365},
			url={http://dx.doi.org/10.1137/060658199},
			review={\MR{MR2368905 (2009a:76151)}},
		}
		
		\bib{SaiRay}{article}{
			author={Saint-Raymond, L.},
			title={Hydrodynamic limits: some improvements of the relative entropy
				method},
			date={2009},
			ISSN={0294-1449},
			journal={Ann. Inst. H. Poincar{\'e} Anal. Non Lin{\'e}aire},
			volume={{\bf 26}},
			number={3},
			pages={705\ndash 744},
			url={http://dx.doi.org/10.1016/j.anihpc.2008.01.001},
			review={\MR{2526399 (2010g:76028)}},
		}
		
	\end{biblist}
\end{bibdiv}


\end{document}